\documentclass[11pt]{amsart}
\usepackage{a4wide,amsfonts,amsmath,amssymb,amsthm,epsfig,cite,graphicx,hyperref,
color, esint,fancyhdr, enumerate, latexsym, amsrefs,cleveref}

\setlength{\oddsidemargin}{0in} \setlength{\evensidemargin}{0in}
\setlength{\textwidth}{6.3in} \setlength{\topmargin}{-0.2in}

\newtheorem{thm}{Theorem}[section]
\newtheorem{prop}[thm]{Proposition}
\newtheorem{lem}[thm]{Lemma}

\theoremstyle{definition}

\newcommand{\ep}{\epsilon}
\newcommand{\mbb}{\mathbb}
\newcommand{\de}{\delta}
\newcommand{\ov}{\overline}
\newcommand{\La}{\Lambda}
\newcommand{\pa}{\partial}
\newcommand{\mf}{\mathbb}

\newcommand{\Om}{\Omega}
\newcommand{\al}{\alpha}
\newcommand{\be}{\beta}

\newcommand{\z}{\zeta}
\newcommand{\la}{\lambda}
\newcommand{\ti}{\tilde}
\newcommand{\De}{\Delta}

\renewcommand{\Re}{\operatorname{Re}}

\hypersetup{
    colorlinks,
    citecolor=blue,
    filecolor=black,
    linkcolor=red,
    urlcolor=black
}

\begin{document}
\title{Boundary behaviour of the Carath\'eodory and Kobayashi-Eisenman volume elements}
\thanks{The first named author was partially supported by the DST-INSPIRE grant IFA-13 MA-21.}
\author{Diganta Borah and Debaprasanna Kar}

\address{Diganta Borah: Indian Institute of Science Education and Research, Pune  411008, India}
\email{dborah@iiserpune.ac.in}

\address{Debaprasanna Kar: Indian Institute of Science Education and Research, Pune  411008, India}
\email{debaprasanna.kar@students.iiserpune.ac.in}

\keywords{Carath\'eodory volume element, Kobayashi-Eisenman volume element, Levi corank one domain, scaling method}
\subjclass[2010]{32F45, 32T27, 32A25}

\begin{abstract}
We study the boundary asymptotics of the Carath\'eodory and Kobayashi-Eisenman volume elements on smoothly bounded convex finite type domains and Levi corank one domains.
\end{abstract}

\maketitle


\section{Introduction}
Denote by $\mbb{B}^n$ the unit ball in $\mbb{C}^n$. For a domain $D\subset\mbb{C}^n$, the Carath\'eodory and Kobayashi-Eisenman volume elements on $D$ at a point $p \in D$ are defined respectively by
\begin{align*}
c_D(p) & = \sup \Big\{ \big\vert \det \psi^{\prime}(p) \big\vert^2 : \psi \in \mathcal{O}(D,\mbb{B}^n), \psi(p) = 0 \Big\},\\
k_D(p) &= \inf \Big\{\big\vert \det \psi^{\prime}(0)\big\vert^{-2 } : \psi \in \mathcal{O}(\mbb{B}^n, D), \psi(0)=p \Big\}.
\end{align*}
By Montel's theorem $c_D(p)$ is always attained and if $D$ is taut then $k_D(p)$ is also attained. Under a holomorphic map $F:D \to \Om$, they satisfy the rule
\[
v_D(p) \geq \big\vert \det F^{\prime}(p)\big\vert^2 v_{\Om}\big(F(p)\big)
\]
where $v=c,k$. In particular, equality holds if $F$ is a biholomorphism. Accordingly, if $k_D$ is nonvanishing (which is the case if $D$ is bounded or taut), then
\[
q_D(p)=\frac{c_D(p)}{k_D(p)}
\]
is a biholomorphic invariant and is called the \textit{quotient invariant}. If $D=\mbb{B}^n$, then
\[
c_{\mbb{B}^n}(p)=k_{\mbb{B}^n}(p)=\big(1-\vert p \vert^2\big)^{-n-1},
\]
and thus $q_{\mbb{B}^n}$ is identically equal to $1$. In general, an application of the Schwarz lemma shows that $q_D \leq 1$. It is a remarkable fact that if $D$ is any domain in $\mbb{C}^n$ and $q_D(p)=1$ for some point $p \in D$, then $q_D(z)=1$ for all $z \in D$ and $D$ is biholomorphic to $\mbb{B}^n$. This was first proved by Wong \cite{Wong} with the hypothesis that $D$ is bounded and complete hyperbolic, which was relaxed by Rosay \cite{Ro} to $D$ being any bounded domain. Dektyarev \cite{Dek} further relaxed this condition to $D$ being only hyperbolic and later Graham and Wu \cite{Gr-Wu} showed that no assumption on $D$ is required for the result to be true, in fact, it is true for any complex manifold. Thus $q_D$ measures the extent to which the Riemann mapping theorem fails for $D$. This fact is a fundamental step in the proof of the Wong-Rosay theorem and several other applications can be found in \cites{Gr-Kr1, Gr-Kr2, Kr-vol}.

The purpose of this note is to study the boundary asymptotics of the Kobayashi volume element on smoothly bounded convex finite type domains and Levi corank one domains in $\mbb{C}^n$. Boundary behaviour of the quotient invariant on strongly pseudoconvex domains had been studied by several authors, see for example \cites{Cheung-Wong, Gr-Kr1, Ma}, and in particular it is known that $q_D(z) \to 1$ if $z \to \pa D$ for a strongly pseudoconvex domain $D$. Recently in \cite{Nik-sq}, \textit{nontangential} boundary asymptotics of the volume elements near $h$-extendible boundary points were obtained. Finally, we also note that in \cite{Nik-Pas}, a relation between the Carath\'eodory volume element and the Bergman kernel was observed in light of the multidimensional Suita conjecture. Our goal is to compute the boundary asymptotics of the Kobayashi volume element in terms of the distinguished polydiscs of McNeal and Catlin devised to capture the geometry of a domain near a convex finite type and Levi corank one boundary point respectively. In order to state our results, let us briefly recall these terminologies. First, let $D=\{\rho<0\}$ be a smoothly bounded convex finite type domain and $p^0 \in \pa D$. For each point $p \in D$ sufficiently close to $p^0$, and $\ep>0$ sufficiently small, McNeal's orthogonal coordinate system $z_1^{p,\ep}, \ldots, z_n^{p,\ep}$ centred at $p$ is constructed as follows (see \cite{McN-adv}). Denote by $D_{p, \ep}$ the domain
\[
D_{p,\ep}=\Big\{ z \in \mbb{C}^n :\rho(z)< \rho(p)+\ep\Big\}.
\]
Let $\tau_n(p,\ep)$ be the distance of $p$ to $\pa D_{p,\ep}$ and $\zeta_n(p,\ep)$ be a point on $\pa D_{p, \ep}$ realising this distance. Let $H_n$ be the complex hyperplane through $p$ and orthogonal to the vector $\zeta_n(p,\ep)-p$. Compute the distance from $p$ to $\pa D_{p,\ep}$ along each complex line in $H_n$. Let $\tau_{n-1}(p,\ep)$ be the largest such distance and let $\zeta_{n-1}(p,\ep)$ be a point on $\pa D_{p,\ep}$ such that $\vert \zeta_{n-1}(p,\ep)-p\vert=\tau_{n-1}(p,\ep)$. For the next step, define $H_{n-1}$ as the complex hyperplane through $p$ and orthogonal to the span of the vectors $\zeta_n(p,\ep)-p, \zeta_{n-1}(p,\ep)-p$ and repeat the above construction. Continuing in this way, we define the numbers $\tau_n(p, \ep), \tau_{n-1}(p, \ep), \ldots, \tau_1(p, \ep)$, and the points $\zeta_{n}(p, \ep)$, $\zeta_{n-1}(p, \ep)$, $\ldots, \zeta_1(p, \ep)$ on $\pa D_{p, \ep}$. Let $T^{p, \ep}$ be the translation sending $p$ to the origin and $U^{p,\ep}$ be a unitary mapping aligning $\zeta_k(p,\ep)-p$ along the $z_k$-axis and $\zeta_k(p, \ep)$ to a point on the positive $\Re z_k$ axis. Set
\[
z^{p,\ep}= U^{p, \ep} \circ T^{p, \ep}(z).
\]
The polydisc
\[
P(p,\ep)=\Big\{z^{p,\ep} : \vert z^{p,\ep}_1 \vert < \tau_1(p, \ep), \ldots, \vert z^{p,\ep}_n\vert < \tau_{n}(p, \ep)\Big\}
\]
is known as McNeal's polydisc. Write $z = (z_1, z_2, \ldots, z_n) = ({}'z, z_n) \in \mbb C^n$. The scaling method (which will be briefly explained later) shows that every sequence in $D$ that converges to $p^0 \in \pa D$ furnishes limiting domains
\begin{equation}\label{cvx-mod}
D_{\infty} = \left\{ z \in \mbb C^n : -1 + \Re \sum_{\al=1}^n b_{\al} z_{\al} + P_{2m}({}'z)  < 0 \right\},
\end{equation}
where $b_\al$ are complex numbers and $P_{2m}$ is a real convex polynomial of degree at most $2m$ ($m \ge 1$), where $2m$ is the $1$-type of $\pa D$ at $p^0$. The polynomial $P_{2m}$ is not unique in general and depends on how the given sequence approaches $p^0$. The limiting domains $D_{\infty}$ are usually called local models associated with $D$ at $p^0$. It is known that $D_{\infty}$ possesses a local holomorphic peak function at every boundary point including the point at infinity and hence is complete hyperbolic (see \cite{G2}).

\begin{thm} \label{cvx}
Let $D=\{\rho<0\}$ be a smoothly bounded convex finite type domain in $\mathbb{C}^n$ and $p^j \in D$ be a sequence converging to $p^0 \in \pa D$. Let $\ep_j=-\rho(p^j)$. Then up to a subsequence,
\[
k_D(p^j) \prod_{\al=1}^n \tau_{\al}(p^j,\ep_j)^2 \to k_{D_\infty}(0)
\]
as $j \to \infty$, where $D_{\infty}$ is a local model associated with $D$ at $p^0$.
\end{thm}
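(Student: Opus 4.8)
The plan is to reduce the statement to the stability of $k$ under the scaling that produces $D_\infty$, and then to prove that stability by a two-sided estimate, using normal families together with the peak functions on $D_\infty$.

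First I would recall the scaling construction (to be described in the next section): after passing to a subsequence there are polynomial automorphisms $\Phi^j$ of $\mbb C^n$, of the form $\Phi^j = \La_j\circ U^{p^j,\ep_j}\circ T^{p^j,\ep_j}$ with $\La_j=\diag\big(\tau_1(p^j,\ep_j)^{-1},\ldots,\tau_n(p^j,\ep_j)^{-1}\big)$ (up to a unipotent polynomial shear of Jacobian one), such that $\Phi^j(p^j)=0$ and $D^j:=\Phi^j(D)$ converges to $D_\infty$ in the local Hausdorff sense: every compact subset of $D_\infty$ eventually lies in $D^j$, and every compact subset of $\mbb C^n\setminus\ov{D_\infty}$ eventually lies outside $D^j$. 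Since $\Phi^j$ is a biholomorphism with $|\det(\Phi^j)'|\equiv\prod_{\al=1}^n\tau_\al(p^j,\ep_j)^{-1}$, the transformation rule (an equality for biholomorphisms) gives $k_D(p^j)\prod_{\al=1}^n\tau_\al(p^j,\ep_j)^2 = k_{D^j}(0)$, so it suffices to prove $k_{D^j}(0)\to k_{D_\infty}(0)$.

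For the upper bound, I would fix $r\in(0,1)$: since $D_\infty$ is complete hyperbolic, hence taut, $k_{D_\infty}(0)$ is realised by some $\psi\in\mathcal{O}(\mbb B^n,D_\infty)$ with $\psi(0)=0$; then $\psi(\ov{r\mbb B^n})$ is compact in $D_\infty$, so $\psi_r(z):=\psi(rz)$ maps $\mbb B^n$ into $D^j$ for all large $j$, fixes the origin, and is a competitor for $k_{D^j}(0)$. Hence $k_{D^j}(0)\le r^{-2n}k_{D_\infty}(0)$ for large $j$, and letting $j\to\infty$ then $r\to1^-$ yields $\limsup_j k_{D^j}(0)\le k_{D_\infty}(0)$. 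For the lower bound, each $D^j$ is bounded (being biholomorphic to $D$), hence taut, so $k_{D^j}(0)$ is attained by some $\psi^j\in\mathcal{O}(\mbb B^n,D^j)$ with $\psi^j(0)=0$ and $|\det(\psi^j)'(0)|^{-2}=k_{D^j}(0)$, and the goal is to extract a convergent subsequence of $\{\psi^j\}$.

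The hard part is precisely this normal-families step: because $D_\infty$ is unbounded and the $D^j$ are not uniformly bounded, Montel's theorem does not apply directly. The idea is to use that $\psi^j$ is distance-decreasing for the Kobayashi distances, so $\psi^j(r\mbb B^n)$ lies in the Kobayashi ball of $D^j$ about $0$ of a radius depending only on $r$; invoking the local holomorphic peak functions of $D_\infty$ at its finite boundary points and at infinity, the localisation of the Kobayashi distance, and the stability of these estimates under the scaling, one shows that these balls remain in a fixed compact subset of $D_\infty$, uniformly in $j$. Then a subsequence of $\{\psi^j\}$ converges locally uniformly to some $\psi\in\mathcal{O}(\mbb B^n,\ov{D_\infty})$ with $\psi(0)=0$; if $\psi(\mbb B^n)$ met $\pa D_\infty$, composing with a local peak function at that boundary point and applying the maximum principle would force $\psi$ to be constant, contradicting $|\det\psi'(0)|=\lim_j|\det(\psi^j)'(0)|\neq0$ (the Jacobians are bounded below since $k_{D^j}(0)$ is bounded above by the previous step). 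Hence $\psi\in\mathcal{O}(\mbb B^n,D_\infty)$ is a competitor for $k_{D_\infty}(0)$, so $k_{D_\infty}(0)\le|\det\psi'(0)|^{-2}=\lim_j k_{D^j}(0)$, and together with the upper bound this gives $k_{D^j}(0)\to k_{D_\infty}(0)$. Besides this uniform estimate, the only other point needing care is setting up the scaling and the convergence $D^j\to D_\infty$, which is by now standard for convex finite type domains.
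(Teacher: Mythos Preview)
Your proposal is correct and follows essentially the same strategy as the paper: reduce via the biholomorphic transformation rule to the stability statement $k_{D^j}(0)\to k_{D_\infty}(0)$, then prove the upper bound by restricting a Kobayashi extremal map for $D_\infty$ to $r\mbb B^n$, and the lower bound by extracting a limit from (near-)extremal maps $\psi^j:\mbb B^n\to D^j$ and showing the limit is a competitor for $k_{D_\infty}(0)$ via the peak-function/maximum-principle argument.

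The only substantive difference is in how you justify normality of $\{\psi^j\}$. The paper simply invokes Gaussier's Lemma~3.1 \cite{G}, whose proof exploits convexity directly: the scaled convex domains $D^j$ are trapped in uniform half-spaces determined by the McNeal geometry, which gives normality almost for free. Your route---using that $\psi^j$ decreases Kobayashi distance, then controlling Kobayashi balls of $D^j$ via peak functions and localisation---is the argument one uses in the Levi corank one case (Section~4 of the paper, following \cite{TT} and \cite{Be}), where no convexity is available. It works here too, but it is heavier than necessary: for convex domains the geometric containment is immediate and one need not pass through the Kobayashi metric at all. Two minor remarks: first, in the convex case the scaling maps are purely affine ($\La_j\circ U^{p^j,\ep_j}\circ T^{p^j,\ep_j}$) with no polynomial shear, so your parenthetical ``up to a unipotent polynomial shear'' is unnecessary (you may be thinking of the Levi corank one normal form); second, the paper uses $\ep$-approximate extremal maps rather than exact ones in Step~2, which avoids appealing to tautness of $D^j$, though your observation that $D^j$ is bounded and hence taut is perfectly fine.
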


Now we consider the Levi corank one case. Recall that a boundary point $p^0$ of a domain $D \subset \mbb{C}^n$ is said to have Levi corank one if there exists a neighbourhood of $p^0$ where $\pa D$ is smooth, pseudoconvex, of finite type, and the Levi form has at least $(n-2)$ positive eigenvalues. If every boundary point of $D$ has Levi corank one, then $D$ is called a Levi corank one domain. This includes the class of {\it all} smoothly bounded pseudoconvex finite type domains in $\mathbb{C}^2$. A basic example in higher dimension is the egg
\[
E_{2m} = \Big\{ z \in \mbb C^n : \vert z_1 \vert^{2m} + \vert z_2 \vert^2 + \ldots + \vert z_n \vert^2 < 1\Big\}
\]
where $m\ge 2$ is an integer. In general, if $\rho$ is a local defining function for $D$ at a Levi-corank one boundary point $p^0$, then it was proved in \cite{Cho2} that for each point $p$ in a sufficiently small neighbourhood $U$ of $p^0$, there are holomorphic coordinates $\zeta=\Phi^p(z)$ such that
\begin{multline}\label{nrmlfrm}
\rho \circ (\Phi^{p})^{-1}(\z) = \rho(p) + 2 \Re \z_n + \sum_{ \substack{j + k \le 2m\\
                                                          j, k > 0}} a_{jk}(p) \z_1^j \ov \z_1^k + \sum_{\al=2}^{n-1} \vert \zeta_{\al}\vert^2 \\
+ \sum_{\al = 2}^{n - 1} \sum_{ \substack{j + k \le m\\
                                                          j, k > 0}} \Re \Big( \big(b_{jk}^{\al}(p) \z_1^j \ov \z_1^k \big) \z_{\al} \Big)+
                                                          O\big(\vert \z_n\vert \vert \z \vert+\vert \z_{*}\vert^2 \vert \z \vert+\vert \z_{*}\vert \vert \z_1\vert^{m+1}+\vert \z_1\vert^{2m+1}\big)
\end{multline}
where $\z_*=(0,\z_2,\ldots, \z_{n-1},0)$.
To construct the distinguished polydiscs around $p$, set
\begin{equation}\label{defn-A-B}
\begin{aligned}
A_l(p) &= \max\Big\{\big\vert a_{jk}(p)\big\vert : j + k = l\Big\}, \quad 2 \leq l \leq 2m,\\
B_{l'}(p) & = {\rm max} \Big\{ \big\vert b_{jk}^\alpha (p) \big\vert \; : \; j+k=l', \; 2 \leq \alpha \leq n-1\Big \}, \; 2 \leq l' \leq m.
\end{aligned}
\end{equation}
Now define for each $\delta > 0$, the special-radius
\begin{alignat}{3} \label{E46}
\tau(p, \delta) = \min \Big\{ \Big( \delta/ A_l(p) \Big)^{1/l}, \; \Big(\delta^{1/2}/B_{l'}(p) \Big)^{1/l'}  \; : \; 2 \le l \le 2m, \; 2 \leq l' \leq m   \Big \}.
\end{alignat}
It was shown in \cite{Cho2} that the coefficients $b_{jk}^\alpha$'s in the above definition of $\tau(p,\delta)$ are insignificant and may be ignored, so that 
\begin{equation}\label{tau-defn2}
\tau(p, \delta) = \min \Big\{ \Big( \delta/ A_l(p) \Big)^{1/l}  \;: \; 2 \le l \le 2m \Big\}.
\end{equation}
Set
\[
\tau_1(p, \delta) = \tau(p, \delta) = \tau, \tau_2(p, \delta) = \ldots = \tau_{n-1}(p, \delta) = \delta^{1/2}, \tau_n(p, \delta) = \delta.
\]
The distinguished polydiscs $Q(p, \delta)$ of Catlin are defined by
\[
Q(p,\delta)=\Big\{(\Phi^p)^{-1}(\z) : \vert \z_1 \vert < \tau_1(p,\de), \ldots, \vert \z_n\vert < \tau_n(p,\de)\Big\}.
\]
The scaling method (which is well known in this case and will be briefly explained later) shows that every sequence in $D$ that converges to $p^0 \in \pa D$ furnishes limiting domains
\begin{equation}\label{Lcoran1-mod}
D_{\infty} = \left\{ z \in \mbb C^n : 2 \Re z_n + P_{2m}(z_1, \ov z_1) + \sum_{\al=2}^{n-1} \vert z_{\al} \vert^2<0\right\}
\end{equation}
where $P_{2m}(z_1, \ov z_1)$ is a subharmonic polynomial of degree at most $2m$ ($m \ge 1$) without harmonic terms, $2m$ being the $1$-type of $\pa D$ at $p^0$. Such a limiting domain $D_{\infty}$ is called a local model associated with $D$ at $p^0$. By Proposition~4.5 of \cite{Yu2} and the remark at the bottom of page~605 of the same article, $D_{\infty}$ possesses a local holomorphic peak function at every boundary point. By Lemma~1 of \cite{BP}, there is a local holomoprhic peak function for $D_{\infty}$ at the point at infinity also. It follows that $D_{\infty}$ is complete hyperbolic (see \cite{G2}). Observe that the point $b=({}'0,-1)$ lies in every such $D_{\infty}$.

\begin{thm} \label{Lcr1}
Let $D=\{\rho<0\}$ be a smoothly bounded Levi corank one domain in $\mathbb{C}^n$ and $p^j \in D$ be a sequence converging to $p^0 \in \pa D$. Let $\de_j>0$ be such that $\ti p^j=(p^j_{1},\cdots, p^j_{n}+\de_j)$ is a point on $\pa D$. Then up to a subsequence,
\[
k_D(p^j) \prod_{\al=1}^n \tau_{\al}(\ti p^j,\de_j)^2 \to c(\rho,p^0)k_{D_\infty}(b)
\]
as $j \to \infty$, where $c(\rho,p^0)$ is a positive constant that depends only on $\rho$ and $p^0$, and $D_{\infty}$ is a local model associated with $D$ at $p^0$.
\end{thm}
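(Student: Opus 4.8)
The plan is to combine a localisation of the Kobayashi--Eisenman volume element at $p^0$ with Cho's scaling at the boundary points $\ti p^j$, following the same scheme that would give Theorem~\ref{cvx}. First I would note that, since $p^j\to p^0$ and $\ti p^j=({}'p^j,p^j_n+\de_j)\in\pa D$ with $\de_j>0$, transversality of the $z_n$-direction to $\pa D$ near $p^0$ (implicit in the very definition of $\ti p^j$) forces $\de_j\to0$, hence $\ti p^j\to p^0$. Fixing a neighbourhood $U$ of $p^0$ on which the normal form \eqref{nrmlfrm} holds at every base point, I would invoke a localisation of the Kobayashi--Eisenman volume element at the finite type point $p^0$ --- based on the existence of a local holomorphic peak function for $D$ there --- to obtain
\[
\frac{k_D(p^j)}{k_{D\cap U}(p^j)}\longrightarrow 1\qquad(j\to\infty),
\]
so that it suffices to analyse $k_{D\cap U}(p^j)$.

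Next, for each $j$ let $\Phi^{\ti p^j}$ be Cho's normalising map at $\ti p^j$ and let $\La_j=\diag\!\big(\tau(\ti p^j,\de_j)^{-1},\de_j^{-1/2},\dots,\de_j^{-1/2},\de_j^{-1}\big)$ be the associated anisotropic dilation, so that $F_j:=\La_j\circ\Phi^{\ti p^j}$ is a biholomorphism of $D\cap U$ onto $D_j:=F_j(D\cap U)$ with $F_j(p^j)=b=({}'0,-1)$ (this is how the scaling is arranged; at worst one has $F_j(p^j)\to b$, which is all that is needed). Applying the transformation rule for $k$ under biholomorphisms,
\[
k_{D\cap U}(p^j)=\big\vert\det F_j'(p^j)\big\vert^2 k_{D_j}(b)=\big\vert\det\La_j\big\vert^2\,\big\vert\det(\Phi^{\ti p^j})'(p^j)\big\vert^2\,k_{D_j}(b),
\]
and since $\tau_1(\ti p^j,\de_j)=\tau(\ti p^j,\de_j)$, $\tau_2=\cdots=\tau_{n-1}=\de_j^{1/2}$, $\tau_n=\de_j$, one has $\vert\det\La_j\vert^2=\big(\prod_{\al=1}^n\tau_\al(\ti p^j,\de_j)^2\big)^{-1}$; hence
\[
k_D(p^j)\prod_{\al=1}^n\tau_\al(\ti p^j,\de_j)^2=\frac{k_D(p^j)}{k_{D\cap U}(p^j)}\,\big\vert\det(\Phi^{\ti p^j})'(p^j)\big\vert^2\,k_{D_j}(b).
\]
By the smooth dependence of Cho's coordinates on the base point, together with $\ti p^j,p^j\to p^0$, the middle factor converges to $c(\rho,p^0):=\big\vert\det(\Phi^{p^0})'(p^0)\big\vert^2>0$, a quantity depending only on $\rho$ and $p^0$. (The absence of such a constant in Theorem~\ref{cvx} reflects the fact that McNeal's change of coordinates there is a unitary composed with a translation, whereas Cho's is only a polynomial automorphism.)

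It then remains to prove that, along a subsequence, $k_{D_j}(b)\to k_{D_\infty}(b)$ for a model $D_\infty$ of the form \eqref{Lcoran1-mod}. Substituting $\z=\La_j^{-1}w$ into $\de_j^{-1}\rho\circ(\Phi^{\ti p^j})^{-1}$ and using the definition \eqref{E46}--\eqref{tau-defn2} of $\tau$, the polynomial $P^j(w_1,\ov w_1):=\sum_{\mu,\nu\ge1,\,\mu+\nu\le2m}a_{\mu\nu}(\ti p^j)\,\tau(\ti p^j,\de_j)^{\mu+\nu}\de_j^{-1}\,w_1^{\mu}\ov w_1^{\nu}$ has all coefficients bounded by $1$ and at least one coefficient of modulus $1$ (from an index realising the minimum in \eqref{tau-defn2}); so after passing to a subsequence $P^j\to P_{2m}$, a nonzero polynomial of degree $\le2m$, and subharmonicity and the absence of harmonic terms pass to the limit. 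By the estimates of \cite{Cho2} the $b^\al_{jk}$-terms and the remainder in \eqref{nrmlfrm} scale to $0$ locally uniformly, so the defining functions of $D_j$ converge locally uniformly to $2\Re w_n+P_{2m}(w_1,\ov w_1)+\sum_{\al=2}^{n-1}\vert w_\al\vert^2$; thus $D_j\to D_\infty$ in the local Hausdorff sense, with $b\in D_\infty$ and $b\in D_j$ for every $j$. The bound $\limsup_j k_{D_j}(b)\le k_{D_\infty}(b)$ is easy: $D_\infty$ is taut, so $k_{D_\infty}(b)$ is attained by some $\psi\in\mathcal{O}(\mbb{B}^n,D_\infty)$ with $\psi(0)=b$, and for $r<1$ the compact set $\psi(r\ov{\mbb{B}^n})$ lies in $D_j$ for $j$ large, so $z\mapsto\psi(rz)$ competes for $k_{D_j}(b)$ and $k_{D_j}(b)\le r^{-2n}k_{D_\infty}(b)$; letting $j\to\infty$ then $r\to1$ gives the claim. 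For the reverse inequality I would take $\psi_j\in\mathcal{O}(\mbb{B}^n,D_j)$ with $\psi_j(0)=b$ and $\vert\det\psi_j'(0)\vert^{-2}\le k_{D_j}(b)+1/j$, and show that for each fixed $r<1$ the sets $\psi_j(r\mbb{B}^n)$ lie in a compact subset of $\mbb{C}^n$ independent of $j$ --- using the local holomorphic peak functions of $D_\infty$ at its finite boundary points and at the point at infinity, transplanted in approximate form to the $D_j$ via the locally uniform convergence of defining functions, together with completeness of the Kobayashi distance of $D_\infty$; a normal families argument then produces, along a subsequence, $\psi_j\to\psi\in\mathcal{O}(\mbb{B}^n,D_\infty)$ with $\psi(0)=b$, whence $k_{D_\infty}(b)\le\vert\det\psi'(0)\vert^{-2}=\lim_j\vert\det\psi_j'(0)\vert^{-2}\le\liminf_j k_{D_j}(b)$. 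Combining the two bounds gives $k_{D_j}(b)\to k_{D_\infty}(b)$, and feeding this and the convergence of the middle factor into the last display finishes the proof.

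I expect the main obstacle to be this last stability step --- especially the lower bound, where one must control the near-extremal maps $\psi_j$ into the varying, \emph{unbounded} domains $D_j$; the crux is to transplant the peak functions and the completeness of the model $D_\infty$ to the $D_j$ uniformly in $j$ so as to confine these maps. The localisation $k_D/k_{D\cap U}\to1$ is the second point requiring genuine input (a local holomorphic peak function at $p^0$); the rest --- the Jacobian bookkeeping and the term-by-term scaling of \eqref{nrmlfrm} --- is routine.
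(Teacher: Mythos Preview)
Your overall scheme---scale via $F_j=\Lambda_j\circ\Phi^{\tilde p^j}$, reduce to a stability statement $k_{D_j}\to k_{D_\infty}$, and split the latter into an easy upper bound and a harder lower bound driven by normality---is the paper's approach. Two points of departure are worth flagging.

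First, your localisation step $k_D/k_{D\cap U}\to 1$ is unnecessary: since each $\Phi^{\tilde p^j}$ is a polynomial automorphism of $\mathbb{C}^n$ (not merely a local biholomorphism), the scaling $F_j$ can be applied to all of $D$, and the paper does so, setting $D^j=F_j(D)$. A related wrinkle: without first putting $\rho$ in normal form at $p^0$, one has $F_j(p^j)=\big({}'0,-\partial\rho/\partial\bar z_n(\tilde p^j)\big)$, which need not converge to $b=({}'0,-1)$. The paper handles this by first applying $T=\Phi^{p^0}$ to reduce to the normalised case (where indeed $F_j(p^j)\to b$ and the Jacobian factor tends to $1$), and then, for general $\rho$, comparing the two sets of $\tau_\alpha$'s via \cite{Cho2}; the limiting ratio $c_0$ of these $\tau$'s is absorbed into $c(\rho,p^0)$. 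So your identification $c(\rho,p^0)=\vert\det(\Phi^{p^0})'(p^0)\vert^2$ is not the full constant.

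Second, and more substantively, the paper's normality argument for the lower bound is different from---and more concrete than---your proposed transplant of peak functions to the $D_j$. Rather than working on the varying unbounded $D^j$, the paper pulls the near-extremal maps back: set $f^j=(S^j)^{-1}\circ\phi^j:\mathbb{B}^n\to D$, landing in the \emph{fixed bounded} domain $D$ with $f^j(0)\to p^0$. One then invokes the Catlin-polydisc attraction estimate (Theorem~3.11 of \cite{TT}; cf.\ \cite{Be}) to obtain $f^j\big(B(0,r)\big)\subset Q(\tilde p^j,C_r\delta_j)$; pushing forward by $S^j$ and using $\tau_1(\tilde p^j,C_r\delta_j)\le C_r^{1/2}\tau_1(\tilde p^j,\delta_j)$ gives $\phi^j\big(B(0,r)\big)\subset\Delta(0,C_r^{1/2})^{n-1}\times\Delta(0,C_r)$ uniformly in $j$, hence normality. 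This sidesteps entirely the need to construct approximate peak functions on the $D^j$; the work is done on $D$ using existing estimates on Kobayashi balls there. Your route via transplanted peak functions might be made to work, but it is not the argument used and would require nontrivial additional justification precisely at the point you flag as the main obstacle.
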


We conclude the article by showing an efficacy of the quotient invariant in determining strong pseudoconvexity if its boundary behaviour is a priori known---a property enjoyed by the squeezing function and its dual the Fridman invariant as well. We refer the reader to the recent articles \cites{MV, NV} and the references therein for the definition and other relevant materials related to these two invariants. Let us denote the squeezing function for a domain $D$ by $s_D$ and the Fridman invariant by $h_D$. It was proved in \cite{Zim} that if $D$ is a bounded convex domain with $C^{2, \al}$ boundary for some $\al\in(0,1)$, then $D$ is strongly pseudoconvex if $s_D(z) \to 1$ as $z \to \pa D$. Mahajan and Verma \cite{MV} showed that if $D$ is a smoothly bounded convex domain or if $D$ is a smoothly bounded $h$-extendible domain (i.e., $D$ is a smoothly bounded pseudoconvex finite type domain for which the Catlin and D'Angelo multitypes coincide at every boundary point), then $D$ is strongly pseudoconvex if either $h_D(z) \to 0$ or $s_D(z) \to 1$ as $z \to \pa D$. We have the following analog for the quotient invariant:

\begin{thm}\label{q-appln}
For any positive integer $n$ and $\al \in (0,1)$, there exists some $\ep=\ep(n, \al)>0$ with the following property: If $D \subset \mf{C}^n$ is a bounded convex domain with $C^{2, \al}$ boundary and if
\[
q_D(p) \geq 1-\ep
\]
outside a compact subset of $D$, then $D$ is strongly pseudoconvex.
\end{thm}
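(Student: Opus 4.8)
The plan is to argue by contradiction via the scaling method, in the spirit of the analogous theorems of Zimmer \cite{Zim} and of Mahajan--Verma \cite{MV} quoted above; the one genuinely new point is that the quotient invariant must be transported to the scaling limit through Theorem~\ref{cvx} and its Carath\'eodory counterpart, in place of the squeezing function. Suppose the statement fails for some $n$ and $\al$. Then for each $k\ge 1$ there is a bounded convex domain $D_k\subset\mbb C^n$ with $C^{2,\al}$ boundary and a compact set $K_k\subset D_k$ with $q_{D_k}\ge 1-1/k$ on $D_k\setminus K_k$, yet $D_k$ is not strongly pseudoconvex. Since $D_k$ is pseudoconvex, its Levi form is everywhere non-negative, so the failure of strong pseudoconvexity produces a boundary point $\xi_k\in\pa D_k$ at which the Levi form has a null direction; fix such a $\xi_k$. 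Choosing $z_k\in D_k$ with $z_k\to\xi_k$, we have $z_k\notin K_k$ for large $k$ and hence $q_{D_k}(z_k)\ge 1-1/k$. Finally, since $q$ is a biholomorphic invariant while complex affine automorphisms of $\mbb C^n$ preserve convexity and $C^{2,\al}$ regularity, we may precompose with such maps so that $\xi_k=0$, the inner normal to $\pa D_k$ at $0$ is $e_n$, and $e_1$ is a null direction of the Levi form of $\pa D_k$ at $0$.

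I would then scale at $\xi_k$ relative to $z_k$. Let $\La_k$ be the associated anisotropic affine dilation supplied by the scaling method: when $\pa D_k$ is of finite type at $\xi_k$ this is exactly the dilation built from McNeal's polydiscs that underlies Theorem~\ref{cvx}, while in general one uses the dilation adapted to the rate at which $\pa D_k$ flattens along $e_1$, as in Zimmer's treatment of $C^{2,\al}$ convex domains. Passing to a subsequence, $\Om_k:=\La_k(D_k)$ converges, in the sense of local Hausdorff (normal) convergence of domains, to a convex domain $\Om_\infty$ -- a local model of the form \eqref{cvx-mod} or of its infinite-type variant -- and $w_k:=\La_k(z_k)\to w_\infty\in\Om_\infty$. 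Two features of $\Om_\infty$ are needed: (a) $\Om_\infty$ is complete hyperbolic, hence taut and with $k_{\Om_\infty}$ nonvanishing; and (b) the boundary of $\Om_\infty$ near $0$ inherits the Levi degeneracy of $\xi_k$, so that $\Om_\infty$ is \emph{not} biholomorphic to $\mbb B^n$.

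Granting (a) and (b), the proof closes quickly. By the stability of the Kobayashi--Eisenman and Carath\'eodory volume elements under the scaling method -- the same mechanism that produces Theorem~\ref{cvx} and its Carath\'eodory analogue, exploiting the tautness of $\Om_\infty$ -- one has $k_{\Om_k}\to k_{\Om_\infty}$ and $c_{\Om_k}\to c_{\Om_\infty}$ locally uniformly; since $k_{\Om_\infty}$ does not vanish, this yields $q_{\Om_k}(w_k)\to q_{\Om_\infty}(w_\infty)$. By affine invariance $q_{\Om_k}(w_k)=q_{D_k}(z_k)\ge 1-1/k$, so $q_{\Om_\infty}(w_\infty)\ge 1$, and therefore $q_{\Om_\infty}(w_\infty)=1$ because $q\le 1$ always. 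The rigidity theorem of Wong, Rosay, Dektyarev and Graham--Wu recalled in the introduction then forces $\Om_\infty$ to be biholomorphic to $\mbb B^n$, contradicting (b). The resulting $\ep=\ep(n,\al)$ is the constant furnished by this compactness argument and is not effective.

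The step I expect to be the real obstacle is the verification of (a) and (b) in full $C^{2,\al}$ generality -- that the scaling limit is at once complete hyperbolic and not biholomorphic to $\mbb B^n$ -- together with keeping control of this uniformly along the family $(D_k)$; this is precisely where the $C^{2,\al}$ hypothesis, rather than mere $C^{2}$ or $C^{1,1}$ regularity, is indispensable, and where Zimmer's refined scaling enters, since the $1$-type of $\pa D_k$ at $\xi_k$ may well be infinite. When that $1$-type is finite the matter is much easier: the model is the explicit domain \eqref{cvx-mod} with $P_{2m}$ of degree equal to that type, it is complete hyperbolic by the discussion following \eqref{cvx-mod}, and it is biholomorphic to $\mbb B^n$ only when $2m=2$, i.e.\ only when $\xi_k$ is strongly pseudoconvex (the $1$-type of a pointed boundary being a biholomorphic invariant); thus (b) is automatic in the finite type case.
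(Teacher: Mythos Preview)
Your strategy is correct in outline, but it amounts to re-deriving, for the particular invariant $q$, machinery that Zimmer has already packaged as a general theorem. The paper proceeds more economically: it invokes Zimmer's result \cite{Zim19} that for any \emph{upper semicontinuous intrinsic} function $f:\mbb{X}_{n,0}\to\mbb{R}$ with the ball-rigidity property (namely, $f(D,\cdot)\ge f(\mbb{B}^n,0)$ on $D$ forces $D\cong\mbb{B}^n$), the desired $\ep=\ep(n,f,\al)$ exists. All that then remains is to check that $q(D,p)=q_D(p)$ is upper semicontinuous on $\mbb{X}_{n,0}$, and this follows directly from the stability arguments already in the paper: full convergence $k_{D^j}(a^j)\to k_{D_\infty}(a)$ (as in Proposition~\ref{stability-cvx}, with the normality input supplied by Lemma~\ref{normality-hc}) together with $\limsup_j c_{D^j}(a^j)\le c_{D_\infty}(a)$ (as in Proposition~\ref{c-stability-cvx}) yields $\limsup_j q_{D^j}(a^j)\le q_{D_\infty}(a)$. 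The ball-rigidity hypothesis is the Graham--Wu theorem, so Zimmer's theorem applies immediately.

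Two remarks on your direct argument. First, you assert $c_{\Om_k}\to c_{\Om_\infty}$ locally uniformly, but the paper explicitly notes (just before Proposition~\ref{c-stability-cvx}) that only the one-sided inequality $\limsup\le$ is known for the Carath\'eodory volume; fortunately that is all you need, since $q_{\Om_\infty}(w_\infty)\ge\limsup_k q_{\Om_k}(w_k)\ge 1$ already forces equality. Second, the step you correctly flag as ``the real obstacle''---showing that the scaling limit is simultaneously complete hyperbolic and \emph{not} biholomorphic to $\mbb{B}^n$, uniformly over a varying family $(D_k)$ of $C^{2,\al}$ convex domains of possibly infinite type---is precisely the substance of \cite{Zim19}. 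Your route would work but essentially reproves that theorem; the paper's route isolates the one genuinely new ingredient (upper semicontinuity of $q$ on $\mbb{X}_{n,0}$) and imports the rest wholesale.
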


\medskip

{\it Acknowledgements}: The authors would like to thank K. Verma for his support and encouragement. Some of the material presented here has been benefited from conversations that the first author had with G.P. Balakumar, S. Gorai, and P. Mahajan. We would like to thank them for their valuable comments and suggestions. We thank the anonymous referee for useful suggestions for improving the exposition herein, especially Theorem~1.3 and its proof are based on the ideas given by the referee.

\section{Regularity of the volume elements}
In this section we prove continuity of the volume elements that is required for the proofs of Theorems~\ref{cvx} and \ref{Lcr1}. The arguments are similar to the case of the Carath\'eodory-Reiffen and Kobayashi-Royden pseudometrics and we present them only for convenience. First, a few remarks. If $D \subset \mbb{C}^n$ is any domain and $p \in D$, then $c_D(p)$ is attained. Indeed, choose a sequence $\psi^j \in \mathcal{O}(D, \mbb{B}^n)$ such that $\psi^j(p)=0$ and $\vert \det (\psi^j)^{\prime}(p)\vert^2 \to c_D(p)$. By Montel's theorem, passing to a subsequence if necessary, $\psi^j$ converges uniformly on compact subsets of $D$ to a map $\psi \in \mathcal{O}(D, \ov{\mbb{B}^n})$. Since $\psi(p)=0$, by the maximum principle $\psi\in \mathcal{O}(D, \mbb{B}^n)$, and it follows that $c_D(p)=\vert \det \psi^{\prime}(p)\vert^2$. In particular, this implies that $c_D(p)$ is always finite. Note that $c_D(p)$ can vanish (for example if $D=\mbb{C}$), but is strictly positive if $D$ is not a Liouville domain. Likewise, if $D$ is taut then similar arguments as above shows that $k_D(p)$ is attained. Observe that $k_D(p)$ is finite for any domain $D$ because we can put a ball $B(p,r)$ inside $D$ and consequently $\phi(t)=rt+p$ is a competitor for $k_D(p)$, giving us $k_D(p) \leq r^{-2n}$. It is possible that $k_D(p)$ can also vanish but if $D$ is bounded, then by invoking Cauchy's estimates we see that $k_D(p)>0$. Similarly, if $D$ is taut, then also $k_D(p)>0$ as it is attained. We will call a  map $\psi \in \mathcal{O}(D, \mbb{B}^n)$ satisfying $\psi(p)=0$ and $\vert \det \psi^{\prime}(p)\vert^2=c_D(p)$ a Carath\'eodory extremal map for $D$ at $p$. Similarly, a Kobayshi extremal map for $D$ at $p$ is a map $\psi \in \mathcal{O}(\mbb{B}^n,D)$ with $\psi(0)=p$ and $\vert \det \psi^{\prime}(0)\vert^{-2}=k_D(p)$.

\begin{prop}\label{cont-vol}
Let $D \subset \mbb{C}^n$ be a domain. Then $c_D$ is continuous. If $D$ is taut, then $k_D$ is also continuous.
\end{prop}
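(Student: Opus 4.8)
The plan is to prove upper and lower semicontinuity of $c_D$, and (assuming tautness) of $k_D$, at each point separately. Two tools recur. The first is a normal family argument applied to near-extremal maps: Montel's theorem for $c_D$, and the defining dichotomy of tautness (subsequential local uniform convergence or compact divergence) for $k_D$; in each case one must then check that the limit map is an \emph{admissible} competitor. The second is the standard involutive automorphism $\varphi_a \in \operatorname{Aut}(\mbb{B}^n)$ interchanging $0$ and $a$, which depends real-analytically on $a \in \mbb{B}^n$ and satisfies $\varphi_0 = -\,\mathrm{id}$, so that $\bigl\vert \det \varphi_a'(a) \bigr\vert \to 1$ and $\bigl\vert \det \varphi_a'(0) \bigr\vert \to 1$ as $a \to 0$; this is used to move the base point of a competitor to or from the origin.

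\emph{Continuity of $c_D$.} For lower semicontinuity at $p$, take a Carath\'eodory extremal map $\psi$ for $D$ at $p$ (it exists, as noted above). For $q$ near $p$, the map $\varphi_{\psi(q)} \circ \psi \in \mathcal{O}(D, \mbb{B}^n)$ sends $q$ to $0$, so
\[
c_D(q) \ge \bigl\vert \det \varphi_{\psi(q)}'(\psi(q)) \bigr\vert^{2}\, \bigl\vert \det \psi'(q) \bigr\vert^{2};
\]
letting $q \to p$, we have $\psi(q) \to 0$, and by continuity of $a \mapsto \varphi_a$ and of $\psi'$ the right-hand side tends to $\vert \det \psi'(p) \vert^{2} = c_D(p)$, i.e. $\liminf_{q \to p} c_D(q) \ge c_D(p)$. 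For upper semicontinuity, let $q^j \to p$ and let $\psi^j$ be Carath\'eodory extremal at $q^j$; by Montel a subsequence converges locally uniformly to some $\psi \in \mathcal{O}(D, \ov{\mbb{B}^n})$, and since $\psi(p) = \lim \psi^j(q^j) = 0$ the maximum principle gives $\psi \in \mathcal{O}(D, \mbb{B}^n)$, a competitor for $c_D(p)$. By Cauchy's estimates $(\psi^j)'(q^j) \to \psi'(p)$, so $c_D(q^j) = \vert \det (\psi^j)'(q^j) \vert^{2} \to \vert \det \psi'(p) \vert^{2} \le c_D(p)$; since such a subsequence can be extracted from any subsequence, $\limsup_j c_D(q^j) \le c_D(p)$, and $c_D$ is continuous.

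\emph{Continuity of $k_D$ for taut $D$.} Upper semicontinuity needs no hypothesis: given $\ep > 0$, pick $\psi \in \mathcal{O}(\mbb{B}^n, D)$ with $\psi(0) = p$ and $\vert \det \psi'(0) \vert^{-2} \le k_D(p) + \ep$; as $\det \psi'(0) \ne 0$, $\psi$ is a biholomorphism of a neighbourhood of $0$ onto a neighbourhood of $p$, so for $q$ near $p$ one may put $a_q = \psi^{-1}(q) \to 0$ and use the competitor $\psi \circ \varphi_{a_q}$ for $k_D(q)$, getting $k_D(q) \le \vert \det \psi'(a_q) \vert^{-2} \vert \det \varphi_{a_q}'(0) \vert^{-2} \to \vert \det \psi'(0) \vert^{-2} \le k_D(p) + \ep$; now let $\ep \downarrow 0$. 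For lower semicontinuity, fix $q^j \to p$, pass to a subsequence realising $\ell := \liminf_j k_D(q^j)$, and (since $k_D(p) < \infty$) assume $\ell < \infty$. Choose Kobayashi extremal maps $\psi^j$ at $q^j$ (these exist because $D$ is taut). As $\psi^j(0) = q^j \to p \in D$, the sequence $(\psi^j)$ is not compactly divergent, so tautness yields a subsequence converging locally uniformly to $\psi \in \mathcal{O}(\mbb{B}^n, D)$ with $\psi(0) = p$. Then $(\psi^j)'(0) \to \psi'(0)$; the case $\ell = 0$ is impossible (it would force $\vert \det (\psi^j)'(0) \vert \to \infty$), so $\ell \in (0, \infty)$, $\vert \det \psi'(0) \vert = \ell^{-1/2} \ne 0$, and $\psi$ is a competitor for $k_D(p)$ giving $k_D(p) \le \vert \det \psi'(0) \vert^{-2} = \ell$. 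Hence $\liminf_{q \to p} k_D(q) \ge k_D(p)$, completing the proof.

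\emph{Main obstacle.} The normal-family extractions and Cauchy estimates are routine; the delicate points are (i) the base-point transport in the two ``easy'' inequalities---postcomposition with $\varphi_{\psi(q)}$ for $c_D$, and precomposition with $\varphi_{a_q}$ for $k_D$ after a local inversion of the extremal map, which is licit exactly because the relevant Jacobian is nonzero---and (ii) verifying, in the reverse inequalities, that the limiting map is admissible: not touching $\pa \mbb{B}^n$ (handled by the maximum principle) for $c_D$, and not being part of a compactly divergent sequence for $k_D$, which is precisely where tautness is used and why it cannot be dropped for $k_D$.
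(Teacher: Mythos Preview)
Your proof is correct. The overall architecture---semicontinuity from both sides, normal-family extraction for one direction and perturbation of a fixed (near-)extremal for the other---matches the paper's, but the local tactics differ in two places worth noting. For $c_D$, the paper does not split into upper and lower semicontinuity at all: it proves local Lipschitz continuity directly by observing that if $\psi$ is extremal at $p$ then the transformation rule gives $c_D(q) \ge \vert\det\psi'(q)\vert^{2}\, c_{\mbb{B}^n}(\psi(q)) = \vert\det\psi'(q)\vert^{2}\,(1-\vert\psi(q)\vert^{2})^{-n-1} \ge \vert\det\psi'(q)\vert^{2}$, whence $c_D(p)-c_D(q) \le \vert\det\psi'(p)\vert^{2}-\vert\det\psi'(q)\vert^{2}$ and Cauchy's estimates finish symmetrically; this yields a quantitatively stronger conclusion and avoids your separate Montel step for upper semicontinuity. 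For the upper semicontinuity of $k_D$, the paper uses the elementary ``shrink and translate'' competitor $t\mapsto \phi((1-r)t)+(z-p)$, which needs neither the inverse function theorem nor the nonvanishing of $\det\psi'(0)$ that your automorphism-and-local-inversion device relies on; your method is equally valid but slightly heavier. The lower semicontinuity of $k_D$ via tautness is essentially identical in both proofs.
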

\begin{proof}
We will show that $c_D$ is locally Lipschitz which of course implies that $c_D$ is continuous. Let $B(a,2r) \subset\subset D$ and fix $p, q \in B(a,r)$. Choose a Carath\'eodory extremal map $\psi$ for $D$ at $p$. Then
\begin{multline*}
c_D(p)-c_D(q) \leq \big\vert \det \psi^{\prime}(p)\big\vert^2 - \big\vert\det \psi^{\prime}(q) \big\vert^2 c_{\mbb{B}^n}\big(\psi(q)\big) \\
= \big\vert \det \psi^{\prime}(p)\big\vert^2- \frac{\big\vert\det \psi^{\prime}(q) \big\vert^2}{\big(1-\vert \psi(q) \vert^2\big)^{n+1}} \leq \big\vert \det \psi^{\prime}(p)\big\vert^2 - \big\vert\det \psi^{\prime}(q)\big \vert^2.
\end{multline*}
Since the distances of $p$ and $q$ to $\pa D$ is at least $r$, by Cauchy's estimates the right hand side is bounded above by $C_r \vert p-q\vert$ where $C_r$ is a constant that depends only on $r$. Thus we can interchange the role of $p$ and $q$ to have $\vert c_D(p)-c_D(q) \vert \leq C_r\vert p-q\vert$ that establishes local Lipschitz property of $c_D$.

For $k_D$, first we show that it is upper semicontinuous for any domain $D$. Let $p\in D$ and $\ep>0$. Then there exists $\phi\in\mathcal{O}(\mbb{B}^n,D)$ with $\phi(0)=p$ such that
\begin{equation}\label{usc-1}
\vert \det \phi^{\prime}(0) \vert^{-2} < k_D(p)+\ep.
\end{equation}
Let $0<r<1$ and set for $z\in D$,
\[
f^z(t)=\phi\big((1-r) t\big)+ (z-p), \quad t \in \mbb{B}^n.
\]
Since $\phi\big(B(0,1-r)\big)$ is a relatively compact subset of $D$, there exists $\de>0$ such that if $z \in B(p,\de)$, then $f^z \in \mathcal{O}(\mbb{B}^n, D)$. Also $f^z(0)=z$ and so $f^z$ is a competitor for $k_{D}(z)$. Therefore,
\[
k_D(z) \leq \big\vert \det (f^z)^{\prime}(0)\big\vert^{-2} =(1-r)^{-2n}\big\vert \det \phi^{\prime}(0)\big\vert^{-2}.
\]
Letting $r\to 0^+$ and using \eqref{usc-1}, we obtain that
\[
k_D(z)< k_D(p)+\ep
\]
for all $z \in B(p,\de)$ which proves the upper semicontinuity of $k_D$.

Next we assume that $D$ is taut and show that $k_D$ is lower semicontinuous. Let $p \in D$. If possible, assume that $k_D$ is not lower semicontinuous at $p$. Then $k_D(p)>0$ and there exist $\ep>0$, a sequence $p^j \to p$, such that
\[
k_D(p^j)< k_D(p)-\ep.
\]
Since $D$ is taut, there are Kobayashi extremal maps $g^j$ for $D$ at $p^j$. Again by tautness and the fact that $g^j(0)=p^j\to p \in D$, passing to a subsequence, $g^j$ converges uniformly on compact subsets of $\mbb{B}^n$ to a map $g \in \mathcal{O}(\mbb{B}^n,D)$. Therefore,
\[
k_D(p^j) = \big\vert \det (g^j)^{\prime}(0)\big\vert^{-2} \to \big\vert\det g^{\prime}(0)\big\vert^{-2}.
\]
But $g$ is a competitor for $k_D(p)$ and so $k_D(p) \leq \vert\det g^{\prime}(0)\vert^{-2}$. Thus we have
\[
k_D(p) \leq k_D(p)-\ep
\]
which is a contradiction. This proves the lower semicontinuity of $k_D$ and thus $k_D$ is continuous if $D$ is taut.
\end{proof}

\section{Proof of Theorem \ref{cvx}}
Let us recall the hypothesis of Theorem~\ref{cvx}: We are given a smoothly bounded convex finite type domain $D=\{\rho<0\}$ and a sequence $p^j \in D$ converging to $p^0 \in \pa D$.  Without loss of generality assume that $p^0=0$. The numbers $\ep_j$ are defined by $\ep_j=-\rho(p^j)$. The maps $U^{p^j,\ep_j}\circ T^{p^j,\ep_j}$ satisfy
\[
U^{p^j,\ep_j}\circ T^{p^j,\ep_j}(p^j)=0.
\]
\subsection{Scaling}
Consider the dilations
\[
\La^{p^j,\ep_j}(z)=\left(\frac{z_1}{\tau_1(p^j,\epsilon_j)},\ldots, \frac{z_n}{\tau_n(p^j,\epsilon_j)}\right).
\]
The scaling maps are the compositions $S^j=\La^{p^j,\ep_j} \circ U^{p^j,\ep_j}\circ T^{p^j,\ep_j}$ and the scaled domains are $D^j=S^j(D)$. Note that $D^j$ is convex and $S^j(p^j)=0 \in D^j$, for each $j$. It was shown in \cite{G} that the defining functions $\rho^j=\frac{1}{\ep_j} \rho \circ (S^j)^{-1}$ for $D^j$, after possibly passing to a subsequence, converge uniformly on compact subsets of $\mf{C}^n$ to
\[
\rho_{\infty}(z)=-1 + \Re\sum_{\al=1}^n b_{\al} z_{\al} + P_{2m}({}'z),
\]
where $b_{\al}$ are complex numbers and $P_{2m}$ is a real convex polynomial of degree less than or equal to $2m$. This implies that after passing to a subsequence if necessary, the domains $D^j$ converge in the local Hausdorff sense to $D_{\infty}=\{\rho_{\infty}<0\}$.

\subsection{Stability of the volume elements}

\begin{lem}\label{normality-cvx}
Let $\phi^j \in \mathcal{O}(\mbb{B}^n, D^j)$ and $\phi^j(0)=a^j \to a \in D_{\infty}$. Then $\phi^j$ admits a subsequence that converges uniformly on compact subsets of $\mbb{B}^n$ to a map $\phi \in \mathcal{O}(\mbb{B}^n, D_{\infty})$.
\end{lem}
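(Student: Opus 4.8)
The plan is the standard two-step normality argument for scaled domains. The first and harder step is to show that $\{\phi^j\}$ does not escape to infinity, namely that it is uniformly bounded on every compact set $K \subset \mbb{B}^n$; granting this, Montel's theorem extracts a locally uniformly convergent subsequence, and a peak-function argument shows that its limit maps $\mbb{B}^n$ into $D_\infty$ rather than merely into $\overline{D_\infty}$. For the first step, fix a compact $K \subset \mbb{B}^n$ and put $M = \sup_{t \in K} d^K_{\mbb{B}^n}(0,t) < \infty$, where $d^K$ denotes the Kobayashi pseudodistance. Since each $\phi^j$ is distance decreasing, $d^K_{D^j}\big(a^j, \phi^j(t)\big) \le M$ for all $t \in K$ and all $j$; and since $a^j \to a \in D_\infty$, the centres $a^j$ eventually lie in a fixed compact set $L \subset D_\infty$, and by the local Hausdorff convergence $L \subset D^j$ for all large $j$. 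So the claim reduces to: the Kobayashi balls of radius $M$ about $L$ in $D^j$ are uniformly bounded in $\mbb{C}^n$.

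To prove this I would use the local holomorphic peak function of $D_\infty$ at the point at infinity, recorded in the discussion preceding the theorem. From such a peak function one manufactures, for each sufficiently large $R$, a negative plurisubharmonic barrier on $D_\infty \cap \{|z|>R\}$ that tends to its supremum $0$ only as $z \to \infty$; because $D^j \to D_\infty$ in the local Hausdorff sense and the peak function is holomorphic on a full neighbourhood of $\overline{D_\infty}$ outside a bounded set, this barrier transplants, for $j$ large, to $D^j \cap \{|z|>R\}$ with constants independent of $j$. Feeding it into the standard lower estimate for the Kobayashi distance in terms of a bounded psh barrier yields $d^K_{D^j}(z,L) \to \infty$, uniformly in $j$, as $|z|\to\infty$ within $D^j$, so the bound $d^K_{D^j}\big(a^j,\phi^j(t)\big)\le M$ confines $\bigcup_j \phi^j(K)$ to a relatively compact subset of $\mbb{C}^n$. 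Alternatively, since the $D^j$ and $D_\infty$ are convex and $D_\infty$, being complete hyperbolic, contains no complex line, one may replace the abstract barrier by the slicing estimates for the Kobayashi metric of convex domains; or one may invoke the known stability of Kobayashi balls along a Hausdorff-convergent sequence of domains with complete hyperbolic limit. This uniform confinement is where I expect the real work to be, since it is exactly here that the unboundedness of the model $D_\infty$ and the absence of a priori uniformity among the $D^j$ must be controlled.

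Once boundedness is in hand, Montel's theorem gives a subsequence, still written $\phi^j$, converging uniformly on compact subsets of $\mbb{B}^n$ to some $\phi \in \mathcal{O}(\mbb{B}^n,\mbb{C}^n)$. For every $t \in \mbb{B}^n$ we have $\phi^j(t)\in D^j$, so the local Hausdorff convergence gives $\phi(t)=\lim_j \phi^j(t)\in\overline{D_\infty}$, while $\phi(0)=\lim_j a^j = a$ is an interior point of $D_\infty$. Suppose now $\phi(t_0)\in\pa D_\infty$ for some $t_0\in\mbb{B}^n$, and let $h$ be a local holomorphic peak function for $D_\infty$ at $\phi(t_0)$. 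On a small neighbourhood $V$ of $t_0$ the composite $h\circ\phi$ is holomorphic with $|h\circ\phi|\le 1$ and $|h(\phi(t_0))|=1$, so by the maximum principle $h\circ\phi$ is a unimodular constant on $V$; the peaking property then forces $\phi(t)=\phi(t_0)$ for all $t\in V$, hence $\phi\equiv\phi(t_0)$ on all of $\mbb{B}^n$ by the identity theorem, contradicting $\phi(0)=a\in D_\infty$. Therefore $\phi(\mbb{B}^n)\subset D_\infty$, i.e.\ $\phi\in\mathcal{O}(\mbb{B}^n,D_\infty)$, as desired.
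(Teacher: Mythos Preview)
Your overall architecture matches the paper's proof: establish normality of $\{\phi^j\}$, extract a limit $\phi$ via Montel, observe $\phi(\mbb{B}^n)\subset\overline{D_\infty}$ from the convergence $\rho^j\to\rho_\infty$, and then use local holomorphic peak functions at boundary points of $D_\infty$ together with the maximum principle to upgrade this to $\phi(\mbb{B}^n)\subset D_\infty$. The second half of your argument is essentially identical to the paper's.

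The only point of divergence is the normality step. The paper does not argue it out; it simply invokes Lemma~3.1 of Gaussier \cite{G}, whose proof rests on precisely the convex ``slicing'' lower bound for the Kobayashi metric that you list as one of your alternatives. So your slicing route is not really a different approach but an unpacking of the citation. Your more elaborate proposal---transplanting a plurisubharmonic barrier from the peak function at infinity to the $D^j$ and feeding it into a lower bound for the Kobayashi distance---would also work and has the advantage of not using convexity, which is why essentially the same scheme reappears in the Levi corank one case (Lemma~\ref{normality2}); but in the present convex setting it is more machinery than necessary. In short, your proposal is correct, and once you commit to the slicing estimate it coincides with the paper's proof modulo the level of detail.
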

\begin{proof}
By the arguments in the proof of Lemma~3.1 in \cite{G}, observe that the family $\phi^j$ is normal. Also, $\phi^j(0)=a^j \to a$. Hence, the sequence $\phi^j$ admits a subsequence, which we denote by $\phi^j$ itself, and which converges uniformly on compact subsets of $\mathbb{B}^n$ to a holomorphic map $\phi:\mathbb{B}^n\rightarrow \mathbb{C}^n$. We will now show that $\phi\in \mathcal{O}(\mbb{B}^n, D_{\infty})$.

Let $0<r<1$. Then $\phi^j$ converges uniformly on $B(0,r)$ to $\phi$, and so the sets $\phi^j(B(0,r)) \subset K$ for some fixed compact set $K$ and for all large $j$. Since $\rho^j(\phi^j(t))<0$ for $t\in B(0,r)$ and for all $j$, we have $\rho_{\infty}(\phi(t)) \leq 0$, or equivalently $\phi(B(0,r))\subset \overline{D}_{\infty}$. Since $r\in(0,1)$ is arbitrary, we have $\phi(\mathbb{B}^n)\subset \overline{D}_\infty$. Since $\phi(0)=a \in D_\infty$, and $D_{\infty}$ possesses a local holomorphic peak function at every boundary point (see \cite{G2}), the maximum principle implies that $\phi(\mathbb{B}^n)\subset D_\infty$.
\end{proof}

\begin{prop}\label{stability-cvx}
For any $a\in D_\infty$,
\[
\lim_{j\rightarrow \infty} k_{D^j}(a)= k_{D_\infty}(a),
\]
Moreover, this convergence is uniform on compact subsets of $D_\infty$.
\end{prop}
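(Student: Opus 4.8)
The plan is to prove the two one-sided estimates
\[
\limsup_{j\to\infty} k_{D^j}(a) \le k_{D_\infty}(a) \quad\text{and}\quad \liminf_{j\to\infty} k_{D^j}(a) \ge k_{D_\infty}(a)
\]
separately, and then to observe that both arguments localize uniformly over a compact $K \subset\subset D_\infty$. Two facts already at hand will be used throughout: the local Hausdorff convergence $D^j \to D_\infty$ established above, which guarantees that every compact subset of $D_\infty$ lies in $D^j$ for all large $j$ (in particular $a \in D^j$ eventually); and the fact that $D_\infty$ is complete hyperbolic, hence taut, so that $k_{D_\infty}$ is attained and, by Proposition~\ref{cont-vol}, continuous.

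For the upper bound I would fix $a$ and take a Kobayashi extremal map $\psi \in \mathcal{O}(\mbb{B}^n, D_\infty)$ for $D_\infty$ at $a$. For each $r \in (0,1)$ the image $\psi(\overline{B(0,1-r)})$ is a compact subset of $D_\infty$, hence contained in $D^j$ once $j$ is large; then $t \mapsto \psi((1-r)t)$ is a competitor for $k_{D^j}(a)$ and yields $k_{D^j}(a) \le (1-r)^{-2n} k_{D_\infty}(a)$. Letting $j \to \infty$ and then $r \to 0^+$ gives the $\limsup$ inequality. For the lower bound I would instead pick near-extremal maps $\phi^j \in \mathcal{O}(\mbb{B}^n, D^j)$ with $\phi^j(0) = a$ and $\vert \det(\phi^j)'(0)\vert^{-2} < k_{D^j}(a) + 1/j$ (legitimate once $a \in D^j$), pass to a subsequence realizing the $\liminf$, and apply Lemma~\ref{normality-cvx} to extract a further subsequence with $\phi^j \to \phi \in \mathcal{O}(\mbb{B}^n, D_\infty)$ and $\phi(0) = a$. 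Since $\phi$ is then a competitor for $k_{D_\infty}(a)$, one obtains $k_{D_\infty}(a) \le \vert \det \phi'(0)\vert^{-2} = \lim_j \vert \det(\phi^j)'(0)\vert^{-2} \le \liminf_j k_{D^j}(a)$. Combining the two inequalities proves the pointwise convergence.

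To upgrade to uniform convergence on a compact $K \subset\subset D_\infty$, I would run the same two arguments with $a$ ranging over $K$. For the upper estimate, the family of extremal maps $\{\psi^a : a \in K\}$ is normal by tautness of $D_\infty$ (their base points $\psi^a(0)=a$ stay in the compact $K$, so no subsequence can be compactly divergent), and therefore $\overline{\bigcup_{a \in K} \psi^a(\overline{B(0,1-r)})}$ is a compact subset of $D_\infty$; it lies in $D^j$ for all large $j$, so $k_{D^j}(a) \le (1-r)^{-2n} k_{D_\infty}(a)$ holds simultaneously for all $a \in K$, which is uniform once one notes $\sup_K k_{D_\infty} < \infty$. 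For the lower estimate I would argue by contradiction: a failure of uniformity would produce $\eta > 0$, a subsequence, and points $a^j \in K$ with $k_{D^j}(a^j) < k_{D_\infty}(a^j) - \eta$; after passing to a subsequence with $a^j \to a \in K$, the near-extremal-map argument above (Lemma~\ref{normality-cvx} applies verbatim with the moving base points $a^j \to a$) would give a competitor $\phi$ for $k_{D_\infty}(a)$ with $k_{D_\infty}(a) \le \vert\det\phi'(0)\vert^{-2} \le \liminf_j\bigl(k_{D_\infty}(a^j) - \eta\bigr) = k_{D_\infty}(a) - \eta$, using continuity of $k_{D_\infty}$ from Proposition~\ref{cont-vol} --- a contradiction. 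Hence $k_{D^j} \to k_{D_\infty}$ uniformly on $K$.

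The step I expect to be the main obstacle is precisely this uniformization of the upper bound: one must ensure that, as $a$ sweeps through $K$, the images $\psi^a(\overline{B(0,1-r)})$ do not escape toward the boundary of $D_\infty$, i.e. remain inside one fixed compact subset. This is exactly where tautness of $D_\infty$ (normality of the extremal family, with base points confined to $K$) is essential; everything else is a direct transcription of the pointwise argument, and the analogous statement in the Levi corank one setting will follow by the same reasoning once the corresponding normality lemma is in place.
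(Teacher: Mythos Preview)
Your proposal is correct and uses the same core ingredients as the paper: the extremal map for $D_\infty$ to get the upper estimate, near-extremal maps together with Lemma~\ref{normality-cvx} for the lower estimate, and continuity of $k_{D_\infty}$ from Proposition~\ref{cont-vol}. The difference is purely organizational. The paper does not first prove pointwise convergence and then upgrade; it runs a single contradiction argument for uniform convergence on a compact $S$: assuming failure yields points $a^j\in S$ with $|k_{D^j}(a^j)-k_{D_\infty}(a^j)|>\epsilon_0$, and after passing to $a^j\to a$ and using continuity of $k_{D_\infty}$ one reduces to showing $\lim_j k_{D^j}(a^j)=k_{D_\infty}(a)$ for this moving sequence. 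The upper bound is then obtained with the translated competitors $\psi^j(t)=\psi((1-r)t)+(a^j-a)$ built from a single extremal $\psi$ at $a$, which avoids your appeal to normality of the whole family $\{\psi^a:a\in K\}$. Your route is a bit longer but entirely valid; the paper's is more economical because the contradiction framework with moving base points handles both the pointwise limit and the uniformity in one stroke.
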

\begin{proof}
Assume that $k_{D^j}$ does not converge to $k_{D_\infty}$ uniformly on some compact subset $S\subset D_\infty$. Then there exist $\epsilon_0>0$, a subsequence of $k_{D^j}$ which we denote by $k_{D^j}$ itself, and a sequence $a^j\in S$ satisfying
\[
\big\vert k_{D^j}(a^j)-k_{D_\infty}(a^j)\big\vert>\epsilon_0
\]
for all large $j$. Since $S$ is compact, after passing to a subsequence if necessary, $a^j \to a \in S$. Since $D_\infty$ is complete hyperbolic, and hence taut, $k_{D_\infty}$ is continuous by Proposition~\ref{cont-vol}. Hence for all large $j$, we have
\[
\big\vert k_{D_\infty}(a^j)-k_{D_\infty}(a) \big\vert \leq \frac{\epsilon_0}{2}.
\]
Combining the above two inequalities we have
\begin{equation}\label{stability-cvx-1}
\big\vert k_{D^j}(a^j)-k_{D_\infty}(a)\big\vert>\frac{\epsilon_0}{2}
\end{equation}
for all large $j$. We will deduce a contradiction in the following two steps:

\textit{Step 1}. $\limsup_{j\rightarrow \infty}k_{D^j}(a^j)\leq k_{D_\infty}(a)$. Since $D_{\infty}$ is taut, we have $0<k_{D_\infty}(a)<\infty$ and there exists a Kobayashi extremal map $\psi$ for $D_{\infty}$ at $a$. Fix $0<r<1$ and define the holomorphic maps $\psi^j:\mathbb{B}^n\rightarrow \mathbb{C}^n$ by
\[
\psi^j(t)=\psi\big((1-r)t\big)+(a^j-a).
\]
Since the image $\psi\big(B(0,1-r)\big)$ is compactly contained in $D_\infty$ and $a^j\rightarrow a$ as $j\rightarrow \infty$, it follows that $\psi^j \in \mathcal{O}(\mathbb{B}^n, D^j)$ for all large $j$. Also, $\psi^j(0)=\psi(0)+a^j-a= a^j$ and thus $\psi^j$ is a competitor for $k_{D^j}(a^j)$. Therefore,
\[
k_{D^j}(a^j)\leq \big\vert \det (\psi^j)'(0)\big\vert^{-2} =(1-r)^{-2n} \big\vert \det \psi'(0)\big\vert^{-2}.
\]
Letting $r \to 0^+$, we get
\[
\limsup_{j\rightarrow \infty}k_{D^j}(a^j)\leq k_{D_\infty}(a).
\]

\textit{Step 2}. $k_{D_\infty}(a)\leq \liminf_{j\rightarrow \infty}k_{D^j}(a^j)$. Fix $\epsilon>0$ arbitrarily small. Then there exist $\phi^j \in \mathcal{O}(\mathbb{B}^n, D^j)$ such that $\phi^j(0)=a^j$ and
\begin{equation}\label{stability-cvx-2}
\big\vert \det (\phi^j)'(0)\big\vert^{-2} < k_{D^j}(a^j)+\epsilon.
\end{equation}
By Lemma~\ref{normality-cvx}, $\phi^j$ admits a subsequence which we denote by $\phi^j$ itself, and which converges uniformly on compact subsets of $\mbb{B}^n$ to a map $\phi\in \mathcal{O} (\mbb{B}^n, D_{\infty})$. Then from \eqref{stability-cvx-2}
\[
\big\vert \det \phi'(0)\big\vert^{-2} \leq \liminf_{j \to \infty} k_{D^j}(a^j)+\epsilon
\]
But $\phi$ is a competitor for $k_{D_\infty}(a)$ and $\ep$ is arbitrary. So we obtain
\[
k_{D_{\infty}}(a) \leq \liminf_{j \to \infty} k_{D^j}(a^j)
\]
as required.

By Step 1 and Step 2, we have $\lim_{j\rightarrow \infty}k_{D^j}(a^j)=k_{D_\infty}(a)$ which contradicts \eqref{stability-cvx-1} and thus the proposition is proved.
\end{proof}

We believe that the analog of the above stability result holds for the Carath\'eodory volume element also but we do not have a proof. However, we do have the following:

\begin{prop}\label{c-stability-cvx}
For $a^j \in D^j$ converging to $a\in D_\infty$,
\[
\limsup_{j\rightarrow \infty} c_{D^j}(a^j) \leq c_{D_\infty}(a),
\]
\end{prop}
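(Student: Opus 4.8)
The plan is to adapt the upper-semicontinuity argument for $c_D$ already present in the proof of Proposition~\ref{cont-vol}, combined with a normal families argument for the extremal maps, keeping in mind that the Carath\'eodory extremal maps go \emph{into} a fixed target $\mbb{B}^n$, which is what makes this direction tractable (unlike the reverse inequality, where the target $D_\infty$ varies). First, since $c_{D^j}(a^j)$ is always attained (as recalled at the start of Section~2), choose for each $j$ a Carath\'eodory extremal map $\psi^j \in \mathcal{O}(D^j, \mbb{B}^n)$ with $\psi^j(a^j)=0$ and $\big\vert \det (\psi^j)'(a^j)\big\vert^2 = c_{D^j}(a^j)$.

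Next I would pass to the limit. Fix a compact set $K \subset D_\infty$; since $D^j \to D_\infty$ in the local Hausdorff sense, $K \subset D^j$ for all large $j$, so the restrictions $\psi^j|_K$ form a uniformly bounded family (bounded by $1$ in modulus). By Montel's theorem, after passing to a subsequence, $\psi^j$ converges uniformly on compact subsets of $D_\infty$ to some $\psi \in \mathcal{O}(D_\infty, \ov{\mbb{B}^n})$. (Here one should exhaust $D_\infty$ by an increasing sequence of compacta and diagonalize.) Since $\psi^j(a^j)=0$ and $a^j \to a$, we get $\psi(a)=0$, so by the maximum principle $\psi \in \mathcal{O}(D_\infty, \mbb{B}^n)$, i.e.\ $\psi$ is a competitor for $c_{D_\infty}(a)$. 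Moreover, uniform convergence on compacta gives convergence of the derivatives: $(\psi^j)'(a^j) \to \psi'(a)$ (using $a^j \to a$ together with the Cauchy estimates to control the difference $(\psi^j)'(a^j) - (\psi^j)'(a)$ on a fixed small ball around $a$). Hence
\[
\limsup_{j\to\infty} c_{D^j}(a^j) = \limsup_{j\to\infty} \big\vert \det (\psi^j)'(a^j)\big\vert^2 = \big\vert \det \psi'(a)\big\vert^2 \leq c_{D_\infty}(a),
\]
where the first equality uses that we may pass to a subsequence realizing the $\limsup$, then extract a further convergent subsequence of the $\psi^j$ as above. Since every subsequence of $c_{D^j}(a^j)$ has a further subsequence whose $\limsup$ is $\leq c_{D_\infty}(a)$, the full $\limsup$ is bounded by $c_{D_\infty}(a)$.

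The main technical point to be careful about is the convergence of derivatives at the \emph{moving} points $a^j$ rather than at the fixed point $a$: one writes $(\psi^j)'(a^j) = (\psi^j)'(a) + \big((\psi^j)'(a^j) - (\psi^j)'(a)\big)$, bounds the second term by $C\,\vert a^j - a\vert$ via Cauchy estimates on a ball $B(a,2r) \subset D_\infty$ (valid for large $j$ since this ball is eventually contained in $D^j$, with the constant $C$ depending only on $r$ and the uniform bound $1$), and notes $(\psi^j)'(a) \to \psi'(a)$ by uniform convergence on $\ov{B(a,r)}$. This is entirely routine but is the one place the varying base point enters; there is no serious obstacle since the target ball is fixed and bounded, which is precisely why only the $\limsup$ inequality — and not equality — is claimed here.
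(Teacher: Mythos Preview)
Your proof is correct and follows essentially the same approach as the paper's: take Carath\'eodory extremal maps $\psi^j$ for $D^j$ at $a^j$, use Montel's theorem (exploiting the fixed target $\mbb{B}^n$) to extract a limit $\psi \in \mathcal{O}(D_\infty,\mbb{B}^n)$ via the maximum principle, and conclude from convergence of the Jacobians that $\limsup c_{D^j}(a^j) \le \big\vert\det\psi'(a)\big\vert^2 \le c_{D_\infty}(a)$. The paper phrases this as a contradiction argument rather than a direct one, but the content is identical; your version is slightly more explicit about the derivative convergence at the moving base points.
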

\begin{proof}
If possible, assume that this is not true. Then there exists a subsequence of $c_{D^j}(a^j)$ which we denote by $c_{D^j}(a^j)$ itself, and an $\ep>0$, such that
\[
c_{D^j}(a^j) > c_{D_{\infty}}(a)+\ep, \quad \text{for all } j \geq 1.
\]
Let $\psi^j$ be a Carath\'eodory extremal map for $D^j$ at $a^j$. Since the target of these maps is $\mbb{B}^n$, passing to a subsequence if necessary, $\psi^j$ converges uniformly on compact subsets of $D_{\infty}$ to a holomorphic map $\psi: D_{\infty} \to \ov{\mbb{B}^n}$, and since $\psi(a)=0$ we must have $\psi\in \mathcal{O}(D_{\infty}, \mbb{B}^n)$. Now, the above inequality implies that this limit map satisfies
\[
\big\vert \det \psi'(a)\big\vert^2 \geq c_{D_{\infty}}(a)+\ep.
\]
On the other hand as $\psi$ is a candidate for $c_{D_{\infty}}(a)$, we also have
\[
c_{D_{\infty}}(a)\geq \big\vert \det \psi'(a)\big\vert^2.
\]
Combining the last two inequalities, we obtain
\[
c_{D_{\infty}}(a)\geq c_{D_{\infty}}(a)+\ep
\]
which is a contradiction.
\end{proof}

\subsection{Proof of Theorem~\ref{cvx}}
By the transformation rule
\[
k_D(p^j)=\big\vert \det (\Lambda^{p^j,\epsilon_j}U^{p^j,\epsilon_j}T^{p^j,\epsilon_j})'(p^j) \big\vert^2 k_{D^j}(0).
\]
Since $\vert \det (\La^{p^j,\epsilon_j})'(0)\vert^2=\prod_{\al=1}^n \tau_{\al}(p^j,\epsilon_j)^{-2}$ we get 
\[
k_D(p^j) \prod_{\al=1}^n \tau_{\al}(p^j,\epsilon_j)^{2}= k_{D^j}(0).
\]
Recall that the domains $D^j$ converge in the local Hausdorff sense to $D_{\infty}$ up to a subsequence and hence in view of Proposition~\ref{stability-cvx}, a limit of the right hand side is $k_{D_\infty}(0)$. This completes the proof of the theorem.

\section{Proof of Theorem \ref{Lcr1}}
\subsection{Change of coordinates}
Let $D=\{\rho<0\}$ be a smoothly bounded Levi corank one domain and $p^0 \in \pa D$. We may assume that the Levi form of $\rho$ at $p_0$ has exactly $n-2$ positive eigenvalues. We recall the definition of the change of coordinates $\Phi^p$ that transform $\rho$ into the normal form \eqref{nrmlfrm}. The maps $\Phi^p$ are actually holomorphic polynomial automorphisms defined as $\Phi^p=\phi_5 \circ \phi_4 \circ \phi_3 \circ \phi_2\circ \phi_1$ where $\phi_i$ are described below. Since the volume elements are invariant under unitary rotations, we assume without loss of generality that $\pa \rho/\pa z_n(p^0) \neq 0$. Then there is a neighbourhood $U$ of $p^0$ such that $(\pa \rho/\pa z_n)(p)\neq 0$ for all $p \in U$. Thus
\[
\nu=\left(\frac{\pa \rho}{\pa z_1}, \ldots, \frac{\pa \rho}{\pa z_n}\right)
\]
is a nonvanishing vector field on $U$. Note that the vector fields
\[
L_n=\frac{\pa}{\pa z_n}, \quad L_{\al}=\frac{\pa}{\pa z_{\al}}-b_{\al}\frac{\pa}{\pa z_n}, \quad 1 \leq \al \leq n-1,
\]
where $b_{\al}=\frac{\pa \rho}{\pa z_{\al}}/\frac{\pa \rho}{\pa z_n}$, form a basis of $T^{1,0}(U)$. Moreover, for $1 \leq \al \leq n-1$, $L_{\al}\rho\equiv 0$ and so $L_{\al}$ is a complex tangent vector field to $\pa D \cap U$. Shrinking $U$ if necessary, we also assume that
\[
\begin{bmatrix} \pa \ov \pa \rho (L_{\al}, \ov L_{\be})\end{bmatrix}_{2 \leq \al,\be \leq n-1}
\]
has all its eigenvalues positive at each $p \in U$.
\begin{enumerate}[(i)]
\item The map $\phi_1$ is defined by
\[
\phi_1(z)=\big(z_1-p_1, \ldots, z_{n-1}-p_{n-1}, \langle z-p, \nu(p)\rangle\big)
\]
and it normalises the linear part of the Taylor series expansion of $\rho$ at $p$. In the new coordinates which we denote by $z$ itself, $\rho$ takes the form
\[
\rho\circ \phi_1^{-1}(z)=\rho(p)+ 2 \Re z_n+ O(\vert z\vert^2).
\]
\item Now 
\[
A=\begin{bmatrix}\frac{\pa^2 \rho}{\pa z_{\al}\pa \ov z_{\be}}(p)\end{bmatrix}_{2 \leq \al,\be\leq n-1}
\]
is a Hermitian matrix and there is a unitary matrix $P=\begin{bmatrix}P_{jk}\end{bmatrix}_{2 \leq j,k\leq n-1}$ such that $P^*AP=D$, where $D$ is a diagonal matrix whose entries are the positive eigenvalues of $A$. Writing $\ti z=(z_2, \ldots z_{n-1})$, the map $w=\phi_2(z)$ is defined by
\begin{align*}
w_1=z_1, \quad w_n=z_n, \quad \ti w = P^T\ti z.
\end{align*}
Then
\[
\sum_{\al,\be=2}^{n-1} \frac{\pa^2 \rho}{\pa z_{\al}\pa \ov z_{\be}}(p) z_{\al} \ov z_{\be} = \ti z^T A \ov{\ti z}=(\ov P \ti w)^T A \ov{(\ov P \ti w)}= \ti w ^T D \ov{\ti w} = \sum_{\al=2}^{n-1} \la_{\al} \vert w_{\al} \vert^2,
\]
where $\la_{\al}>0$ is the $\al$-th entry of $D$. Thus, denoting the new coordinates $w$ by $z$ again,
\[
\rho\circ \phi_1^{-1}\circ\phi_2^{-1}(z)=\rho(p)+2\Re z_n + \sum_{\al=2}^{n-1} \la_{\al}\vert z_{\al}\vert^2+O(\vert z\vert^2)
\]
where $O(\vert z\vert^2)$ consists of only the non-Hermitian quadratic terms and all other higher order terms.
\item The map $w=\phi_3(z)$ is defined by $w_1=z_1, w_n=z_n$, and $w_j=\la_j^{1/2}z_j$ for $2 \leq j \leq n-1$. In the new coordinates, still denoted by $z$,
\begin{multline}\label{rho-exp3}
\rho \circ \phi_1^{-1}\circ\phi_2^{-1} \circ \phi_3^{-1}(z)= \rho(p)+2\Re z_n +\sum_{\al=2}^{n-1}\sum_{j=1}^m 2 \Re\big((a^{\al}_jz_1^j+b^{\al}_j\ov z_1^j)z_{\al}\big)\\
 + 2 \Re \sum_{\al=2}^{n-1} c_{\al}z_{\al}^2
 + \sum_{2\leq j+k \leq 2m}a_{jk}z_1^j\ov z_1^k
 +\sum_{\al=2}^{n-1} \vert z_{\al}\vert^2
 +\sum_{\al=2}^{n-1} \sum_{\substack{j+k\leq m\\j,k>0}} 2 \Re \big(b^{\al}_{jk}z_1^j \ov z_1^k z_{\al}\big)\\
 +O\big(\vert z_n\vert \vert z \vert+\vert z_{*}\vert^2 \vert z \vert+\vert z_{*}\vert \vert z_1\vert^{m+1}+\vert z_1\vert^{2m+1}\big)
\end{multline}
where $z_*=(0,z_2,\ldots, z_{n-1},0)$.

\item Next, the pure terms in \eqref{rho-exp3}, i.e., $z_{\al}^2$, $z_1^k$, $\ov z_1^k$, as well as $z_1^kz_{\al}$, $\ov z_1^k\ov z_{\al}$ terms are removed by absorbing them into the normal variable $z_n$ in terms of the change of coordinates $t=\phi_4(z)$ which is defined by
\begin{align*}
z_j & =t_j, \quad 1 \leq j \leq n-1,\\
z_n & =t_n -\hat{Q_1}(t_1, \ldots, t_{n-1}),
\end{align*}
where
\[
\hat{Q_1}(t_1, \ldots, t_{n-1}) = \sum_{k=2}^{2m}a_{k0}t_1^k-\sum_{\al=2}^{n-1}\sum_{k=1}^{m}a_k^{\al}t_{\al}t_1^k-\sum_{\al=2}^{n-1}c_{\al}t_{\al}^2.
\]

\item In the final step, the terms of the form $\ov t_1^j t_{\al}$ are removed by applying the transformation $\zeta=\phi_5(t)$ given by
\begin{align*}
t_1 & =\zeta_1, t_n=\zeta_n,\\
t_{\al} & = \z_{\al}-Q_2^{\al}(\z_1), \quad 2 \leq \al \leq n-1,
\end{align*}
where $Q_2^{\al}(\z_1)=\sum_{k=1}^m \ov b_k^{\al}\z_1^k$. In these coordinates, $\rho$ takes the normal form \eqref{nrmlfrm}.
\end{enumerate}
It is evident from the definition of $\Phi^p$ that $\Phi^p(p)=0$,
\[
\Phi^{p}(p_1, \ldots, p_{n-1}, p_n - \ep) = \left(0, \ldots, 0, -\ep \;\frac{\pa \rho}{\pa \ov z_n}(p)\right),
\]
and
\begin{equation}\label{der-phi-p}
\det (\Phi^p)^{\prime}(p)=\frac{\pa \rho}{\pa \ov z_n}(p) (\la_2 \cdots \la_{n-1})^{1/2},
\end{equation}
where $\la_2, \ldots, \la_{n-1}$ are the positive eigenvalues of
\[
\begin{bmatrix} \frac{\pa^2 \rho}{\pa z_{\al} \pa \ov z_{\be}}(p) \end{bmatrix}_{2 \leq \al,\be \leq n-1}.
\]
\subsection{Scaling} 
Suppose $p^0=0$ and $\rho$ is in the normal form (\ref{nrmlfrm}) for $p=p^0$; in particular, $\nu(p^0) = ('0,1)$. Let $p^j \in D$ be a sequence converging to $p^0$. The points $ \ti p^j \in \partial D $ are chosen so that $ \ti p^j = p^j + ('0, \de_j) $ for some 
$ \de_j > 0 $. Then $ \de_j \approx \delta_D(p^j) $, where $\de_D(p)=d(p,\pa D)$ is the distance of $p$ to the boundary of $D$. Here and henceforth by the notation $a \approx b$ for positive functions $a,b$ depending on several parameters, we mean that the ratio $a/b$ is bounded above and below by some uniform positive constants independent of the parameters. The polynomial automorphisms $ \Phi^{\ti p^j} $ of $ \mathbb{C}^n $ as described above satisfy $ \Phi^{\ti p^j}(\ti p^j) = ('0, 0) $ and 
\[
 \Phi^{\ti p^j} (p^j) = 
\big('0, - \de_j d_0(\ti p^j) \big) ,
\]
where $ d_0(\ti p^j ) = \partial \rho/\partial \overline{z}_n (\ti p^j) \rightarrow 1 $ as $ j \rightarrow \infty $. 
 
Define a 
dilation of coordinates by
\[
\Delta^{\ti p^j, \de_j} (z_1, z_2, \ldots, z_n ) = 
\left( \frac{z_1}{\tau(\ti p^j, \de_j)}, \frac{z_2}{\de_j^{1/2}}, \ldots, \frac{z_{n-1}}{\de_j^{1/2}}, \frac{z_n} {\de_j} \right).
\]
The scaling maps are $S^j= \Delta^{\ti p^j, \de_j} \circ \Phi^{\ti p^j}$ and the scaled domains are $D^j=S^j(D)$. Note that $D^j$ contains $S^j(p^j) = \left('0, - d_0(\ti p^j) \right)$ which we will denote by $b^j$ and which converges to $b=('0,-1)$. From \eqref{nrmlfrm}, the defining function $\rho^j=\frac{1}{\de_j} \rho \circ (S^j)^{-1}$ for $D^j$ has the form
\begin{align*}
\rho^j(z)=2\Re z_n+P^j(z_1, \ov z_1)+\sum_{\al=2}^n \vert z_{\al}\vert^2+\sum_{\al=2}^{n-1} \Re\big(Q^{j}_{\al}(z_1, \ov z_1)z_{\al}\big)+O(\tau_1^{j}),
\end{align*}
where $\tau_1^j=\tau_1(\ti p^j, \de_j)$,
\[
P^j(z_1, \ov z_1)=\sum_{\substack{\mu + \nu \le 2m\\
                                                          \mu, \nu> 0}}a_{\mu\nu}(\ti p^j)\de_j^{-1}(\tau_1^j)^{\mu+\nu}z_1^{\mu}\ov z_1^{\nu},
\]
and
\[
Q^j_{\al}(z_1, \ov z_1)=\sum_{\substack{\mu + \nu \le m\\
                                                          \mu, \nu > 0}}b^{\al}_{\mu\nu}(\ti p^j)\de_j^{-1/2}(\tau_1^j)^{\mu+\nu}z_1^{\mu}\ov z_1^{\nu}.
\]
By \eqref{defn-A-B} and the definition of $\tau_1$, the coefficients of $P^j$ and $Q^j_{\al}$ are bounded by $1$. By Lemma~3.7 in \cite{TT}, it follows that the defining functions $\rho^j$, after possibly passing to a subsequence, converge together with all derivatives uniformly on compact subsets of $\mf{C}^n$ to
\[
\rho_{\infty}(z)= 2 \Re z_n + P_{2m}(z_1, \ov z_1) + \sum_{\al=2}^{n-1} \vert z_{\al} \vert^2,
\]
where $P_{2m}(z_1, \ov z_1)$ is a polynomial of degree at most $2m$ without harmonic terms. This implies that the corresponding domains $D^j$ converge in the local Hausdorff sense to $D_{\infty} = \{\rho_{\infty}<0\}$. Note that since $D_{\infty}$ is a smooth limit of pseudoconvex domains, it is pseudoconvex and hence $P_{2m}$ is subharmonic.

\subsection{Stability of the volume elements}
\begin{lem}\label{normality2}
Let $\phi^j \in \mathcal{O}(\mbb{B}^n, D^j)$ and $\phi^j(0)=a^j \to a \in D_{\infty}$. Then $\phi^j$ admits a subsequence that converges uniformly on compact subsets of $\mbb{B}^n$ to a map $\phi\in \mathcal{O}(\mbb{B}^n, D_{\infty})$.
\end{lem}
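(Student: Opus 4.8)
The plan is to imitate the proof of Lemma~\ref{normality-cvx}, the sole new ingredient being the normality of $\{\phi^j\}$, which in the convex case was taken from \cite{G}. Thus the argument splits into three steps: (1) show $\{\phi^j\}$ is a normal family and pass to a subsequence converging uniformly on compact subsets of $\mbb{B}^n$ to some $\phi\in\mathcal{O}(\mbb{B}^n,\mbb{C}^n)$; (2) show $\phi(\mbb{B}^n)\subset\ov{D}_\infty$; (3) upgrade this to $\phi(\mbb{B}^n)\subset D_\infty$.

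For step (1): since $\phi^j\in\mathcal{O}(\mbb{B}^n,D^j)$ with $\phi^j(0)=a^j$, the distance-decreasing property of the Kobayashi pseudodistance gives $k_{D^j}\big(a^j,\phi^j(t)\big)\le k_{\mbb{B}^n}(0,t)$ for all $t\in\mbb{B}^n$. Hence it suffices to prove that the Catlin-scaled domains $D^j$ are \emph{uniformly} hyperbolic near $a\in D_\infty$, in the sense that for each $R>0$ there is a compact $K_R\subset\mbb{C}^n$ with $\{z : k_{D^j}(a^j,z)<R\}\subset K_R$ for all large $j$; then $\phi^j(\ov{B(0,r)})$ stays in a fixed compact set for each $r<1$, and Montel's theorem produces the desired convergent subsequence. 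This uniform hyperbolicity is exactly the ingredient underlying the scaling method in the Levi corank one setting: it follows from the $C^\infty_{\mathrm{loc}}$ convergence $\rho^j\to\rho_\infty$ (Lemma~3.7 of \cite{TT}) together with the bounded plurisubharmonic local peak functions available for Levi corank one domains with uniform estimates, as constructed in \cite{Cho2}---transplanting these to $D^j$ for large $j$ furnishes the sets $K_R$.

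Steps (2) and (3) are soft and proceed verbatim as in Lemma~\ref{normality-cvx}. Fix $0<r<1$. Since $\phi^j\to\phi$ uniformly on $\ov{B(0,r)}$, the images $\phi^j\big(B(0,r)\big)$ lie in one fixed compact set $K$ for all large $j$; as $\rho^j\big(\phi^j(t)\big)<0$ on $B(0,r)$ and $\rho^j\to\rho_\infty$ uniformly on $K$, letting $j\to\infty$ gives $\rho_\infty\big(\phi(t)\big)\le 0$ on $B(0,r)$, and since $r$ is arbitrary, $\phi(\mbb{B}^n)\subset\ov{D}_\infty$. Finally $\phi(0)=a\in D_\infty$, and because $D_\infty$ carries a local holomorphic peak function at every boundary point (recalled above from \cite{Yu2} and \cite{BP}), the maximum principle prevents $\phi$ from meeting $\pa D_\infty$; hence $\phi(\mbb{B}^n)\subset D_\infty$, as required.

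The main obstacle is step (1): in the convex case normality was immediate, whereas here it is tantamount to the uniform hyperbolicity of the Catlin-scaled domains and therefore rests on the uniform plurisubharmonic peak-function estimates for Levi corank one domains. Once that is granted, everything else is a routine limiting argument identical to the convex case.
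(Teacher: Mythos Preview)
Your steps (2) and (3) match the paper verbatim, and you correctly isolate normality as the only nontrivial point. The difference lies entirely in how step (1) is executed.

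The paper does \emph{not} argue abstractly via uniform Kobayashi hyperbolicity of the scaled domains. Instead it pulls everything back to the original domain: setting $q^j=(S^j)^{-1}(a^j)\in D$, it first shows $q^j\to p^0$ by trapping $q^j$ inside the Catlin polydisc $Q(\ti p^j,C\de_j)$, and then, with $f^j=(S^j)^{-1}\circ\phi^j:\mbb{B}^n\to D$, invokes the attraction property from \cite[Theorem~3.11]{TT} and \cite[Proposition~1]{Be} to obtain $f^j\big(B(0,r)\big)\subset Q(\ti p^j,C_r\de_j)$ for each $r<1$. Scaling forward and using the elementary inequalities $\tau_1(\ti p^j,C_r\de_j)\le C_r^{1/2}\tau_1(\ti p^j,\de_j)$ etc.\ then lands $\phi^j\big(B(0,r)\big)$ in a fixed polydisc, and Montel applies.

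Your route---Kobayashi distance decreasing plus ``uniform hyperbolicity'' of the $D^j$---is conceptually equivalent (indeed, the Berteloot-type attraction lemma in \cite{Be,TT} is essentially a quantitative hyperbolicity statement proved via the bounded plurisubharmonic peak functions of \cite{Cho2}). What is missing in your write-up is the actual mechanism: the peak functions in \cite{Cho2} live on $D$ near $p^0$, and ``transplanting these to $D^j$'' is precisely the scaling computation the paper carries out. As stated, your step (1) asserts the conclusion of that computation without performing it; if you want to keep your formulation, you should either cite the attraction lemma from \cite{TT,Be} directly (noting it gives exactly the compact Kobayashi-ball containment you want after conjugating by $S^j$), or write out the polydisc estimate explicitly. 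Either way buys the same thing; the paper's version has the advantage of being self-contained modulo one clearly cited result.
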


\begin{proof}
We first claim that the sequence $q^j:=(S^j)^{-1}(a^j) \in D$ converges to $p^0 \in \pa D$, where $p^0=0$ is the base point for scaling. Choose a relatively compact neighbourhood $K$ of $a$ in $D_{\infty}$. Since $a^j \to a \in D_{\infty}$, $a^j \in K$ for all large $j$. Now choose a constant $C>1$ large enough, so that $K$ is compactly contained in the polydisc
\[
\De(0, C^{1/2m}) \times \De(0,C^{1/2}) \cdots \De(0, C^{1/2})\times \De(0, C).
\]
From \eqref{tau-defn2}, we have $\tau_1(\ti p^j, C\de_j) \geq C^{1/2m} \tau_1(\ti p^j, \de_j)$. Moreover, by definition,
\[
\tau_{\al}(\ti p^j, C\de_j)=(C\de^j)^{1/2}=C^{1/2} \tau_{\al}(\ti p^j, \de_j)
\]
for $\al=2,\ldots, n-1$, and
\[
\tau_n(\ti p^j, C\de_j)=C\de_j=C\tau_n(\ti p^j, \de_j).
\]
As a consequence, the above polydisc is contained in
\[
\prod_{\al=1}^n \De\left(0, \frac{\tau_{\al}(\ti p^j, C\de_j)}{\tau_{\al}(\ti p^j, \de_j)}\right).
\]
The pull back of this polydisc by $S^j=\De^{\ti p^j,\de_j}\circ \Phi^{\ti p^j}$ is precisely $Q(\ti p^j, C\de_j)$. Thus
\[
q^j \in Q(\ti p^j, C\de_j)
\]
for all large $j$. Since $\ti p^j \to p^0$ and $\de_j \to 0$ as $j \to \infty$, it follows that $q^j \to p^0$ establishing our claim.

Now we prove that the family $\phi^j$ is normal. Consider the sequence of maps
\[
f^j=(S^j)^{-1}\circ \phi^j : \mbb{B}^n \to D.
\]
Note that $f^j(0)=q^j \to p^0$. By the arguments similar to the proof of Theorem~3.11 in \cite{TT} (also see Proposition~1 of \cite{Be}), for every $0<r<1$, there exists a constant $C_r$ depending only on $r$ such that
\[
f^j\big(B(0,r)\big) \subset Q(\ti p^j, C_r\de_j)
\]
for all large $j$. This implies that
\[
\phi^j\big(B(0,r)\big) \subset \prod_{\al=1}^n \De\left(0, \frac{\tau_{\al}(\ti p^j, C_r\de_j)}{\tau_{\al}(\ti p^j, \de_j)}\right)
\]
for all large $j$. Again from \eqref{tau-defn2}, $\tau_1(\ti p^j, C_r\de_j) \leq C_r^{1/2} \tau_1(\ti p^j, \de_j)$. Together with this, using the definition of $\tau_{\al}$ for $\al=2, \cdots n$, we see that the above polydisc is contained in
\[
\De\big(0, C_r^{1/2}\big) \times \cdots \times  \De\big(0, C_r^{1/2}\big) \times \De\big(0, C_r\big).
\]
Using a diagonal argument, it now follows that the family $\phi^j$ is normal.

Now, since $\phi^j(0)=a^j \to a \in D_{\infty}$, $\phi^j$ admits a subsequence which we denote by $\phi^j$ itself and which converges uniformly on compact subsets of $\mbb{B}^n$ to a holomorphic mapping $\phi:\mathbb{B}^n\rightarrow \mathbb{C}^n$. Since $D_{\infty}$ possess a local holomorphic peak function at every boundary point \cite[Proposition~4.5 and the remark in page 605]{Yu2}, arguments similar to Lemma~\ref{normality-cvx} now implies that $\phi(\mathbb{B}^n)\subset D_\infty$.
\end{proof}

With this lemma, the proof of the following proposition is exactly similar to that of Proposition~\ref{stability-cvx} and so we do not repeat the arguments.
\begin{prop}\label{stability-levicorank1}
For any $a\in D_\infty$,
\[
\lim_{j\rightarrow \infty} k_{D^j}(a)= k_{D_\infty}(a),
\]
Moreover, this convergence is uniform on compact subsets of $D_\infty$.
\end{prop}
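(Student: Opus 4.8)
The plan is to run verbatim the argument of Proposition~\ref{stability-cvx}, with Lemma~\ref{normality2} in place of Lemma~\ref{normality-cvx}. Suppose, for contradiction, that $k_{D^j}$ does not converge to $k_{D_\infty}$ uniformly on some compact set $S \subset D_\infty$; then after passing to a subsequence there are $\ep_0 > 0$ and points $a^j \in S$ with $\big\vert k_{D^j}(a^j) - k_{D_\infty}(a^j) \big\vert > \ep_0$ for all large $j$. Passing to a further subsequence we may assume $a^j \to a \in S$. As recalled after \eqref{Lcoran1-mod} (via \cites{Yu2, BP, G2}), $D_\infty$ is complete hyperbolic, hence taut, so $k_{D_\infty}$ is continuous by Proposition~\ref{cont-vol}, $0 < k_{D_\infty}(a) < \infty$, and a Kobayashi extremal map for $D_\infty$ at $a$ is attained. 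Continuity lets us replace the displayed inequality by $\big\vert k_{D^j}(a^j) - k_{D_\infty}(a) \big\vert > \ep_0/2$ for all large $j$, so it suffices to show $k_{D^j}(a^j) \to k_{D_\infty}(a)$, which I would establish in the usual two steps.

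First I would prove $\limsup_{j} k_{D^j}(a^j) \le k_{D_\infty}(a)$: fix $0<r<1$, let $\psi$ be a Kobayashi extremal map for $D_\infty$ at $a$, and set $\psi^j(t) = \psi\big((1-r)t\big) + (a^j - a)$. Since $\psi\big(\ov{B(0,1-r)}\big)$ is a compact subset of $D_\infty$, $a^j \to a$, and $D^j \to D_\infty$ in the local Hausdorff sense, one has $\psi^j \in \mathcal{O}(\mbb{B}^n, D^j)$ for all large $j$; as $\psi^j(0) = a^j$, this is a competitor for $k_{D^j}(a^j)$, whence $k_{D^j}(a^j) \le (1-r)^{-2n}\big\vert \det \psi'(0)\big\vert^{-2}$, and letting $r \to 0^+$ gives the bound. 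Second, for $k_{D_\infty}(a) \le \liminf_{j} k_{D^j}(a^j)$, fix $\ep>0$ and pick $\phi^j \in \mathcal{O}(\mbb{B}^n, D^j)$ with $\phi^j(0) = a^j$ and $\big\vert \det(\phi^j)'(0)\big\vert^{-2} < k_{D^j}(a^j) + \ep$. By Lemma~\ref{normality2}, after passing to a subsequence $\phi^j \to \phi$ uniformly on compact subsets of $\mbb{B}^n$ with $\phi \in \mathcal{O}(\mbb{B}^n, D_\infty)$ and $\phi(0)=a$, so $\big\vert \det \phi'(0)\big\vert^{-2} \le \liminf_{j} k_{D^j}(a^j) + \ep$; since $\phi$ is a competitor for $k_{D_\infty}(a)$ and $\ep$ is arbitrary, the bound follows. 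Combining the two steps contradicts $\big\vert k_{D^j}(a^j) - k_{D_\infty}(a) \big\vert > \ep_0/2$.

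The only input beyond the formal structure of Proposition~\ref{stability-cvx} is Lemma~\ref{normality2} — equivalently, the facts that $(S^j)^{-1}(a^j)$ lies in the Catlin polydiscs $Q(\ti p^j, C\de_j)$ and hence tends to $p^0$, and that $f^j\big(B(0,r)\big) \subset Q(\ti p^j, C_r\de_j)$ by the estimates of \cite{TT}, which together force normality of the scaled competitor maps and confine their limits to $\ov D_\infty$, the peak-function argument of Lemma~\ref{normality-cvx} then placing them inside $D_\infty$. That normality statement is the substantive point, and it has already been discharged; the remaining steps are identical to the convex case, so no new obstacle arises.
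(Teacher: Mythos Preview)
Your proposal is correct and follows exactly the approach the paper takes: the paper itself states that the proof is ``exactly similar to that of Proposition~\ref{stability-cvx}'' with Lemma~\ref{normality2} replacing Lemma~\ref{normality-cvx}, and declines to repeat the argument. You have simply written out that argument in full, with the appropriate substitutions and the correct appeal to the complete hyperbolicity of $D_\infty$ established after \eqref{Lcoran1-mod}.
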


Similarly, the proof of Proposition~\ref{c-stability-cvx} also gives
\begin{prop}\label{c-stability-levicorank1}
For $a^j \in D^j$ converging to $a\in D_\infty$,
\[
\limsup_{j\rightarrow \infty} c_{D^j}(a^j) \leq c_{D_\infty}(a).
\]
\end{prop}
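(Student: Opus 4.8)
The plan is to argue by contradiction, mirroring the proof of Proposition~\ref{c-stability-cvx}, the only point requiring care being that the competing maps now live on the varying Levi corank one scaled domains $D^j$. Suppose the assertion fails. Then, after passing to a subsequence, there is an $\ep>0$ with $c_{D^j}(a^j) > c_{D_\infty}(a)+\ep$ for all $j$. For each $j$ the quantity $c_{D^j}(a^j)$ is attained (recall from Section~2 that the Carath\'eodory volume element is always attained on any domain), so I would select a Carath\'eodory extremal map $\psi^j \in \mathcal{O}(D^j, \mbb{B}^n)$ with $\psi^j(a^j)=0$ and $\vert \det (\psi^j)'(a^j)\vert^2 = c_{D^j}(a^j)$.

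The first substantive step is to extract a limiting map on $D_\infty$. The essential input, supplied by the scaling construction of this section, is that the domains $D^j$ converge to $D_\infty$ in the local Hausdorff sense; consequently every compact set $K \subset D_\infty$ is contained in $D^j$ for all large $j$, so that $\psi^j\vert_K$ is defined once $j$ is large. Because the common target $\mbb{B}^n$ is bounded, the family $\{\psi^j\}$ is uniformly bounded on each such $K$ and hence normal by Montel's theorem; passing to a further subsequence, $\psi^j$ converges uniformly on compact subsets of $D_\infty$ to a holomorphic map $\psi : D_\infty \to \ov{\mbb{B}^n}$. Since $a^j \to a$ and $\psi^j(a^j)=0$, the limit satisfies $\psi(a)=0$. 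As $D_\infty$ is connected and $\vert \psi \vert^2$ is plurisubharmonic, the maximum principle rules out $\psi$ meeting $\pa \mbb{B}^n$ at an interior point, for otherwise $\vert \psi \vert^2 \equiv 1$, contradicting $\psi(a)=0$; hence $\psi \in \mathcal{O}(D_\infty, \mbb{B}^n)$.

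It then remains to compare derivatives and close the contradiction. Uniform convergence on compacta together with the Cauchy estimates gives $(\psi^j)'(a^j) \to \psi'(a)$, using $a^j \to a$, so that $\vert \det \psi'(a)\vert^2 = \lim_{j} c_{D^j}(a^j) \geq c_{D_\infty}(a)+\ep$. On the other hand $\psi$ is a competitor for $c_{D_\infty}(a)$, whence $c_{D_\infty}(a) \geq \vert \det \psi'(a)\vert^2$; combining the two yields $c_{D_\infty}(a) \geq c_{D_\infty}(a)+\ep$, the desired contradiction. The Levi corank one geometry enters the argument only through the local Hausdorff convergence $D^j \to D_\infty$ established above, and every remaining step is verbatim the convex case. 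The main (indeed only) obstacle is precisely this transfer of a normal family across the varying domains $D^j$ to a single limit on $D_\infty$, which the local Hausdorff convergence secures. I would finally note that the argument delivers only the one-sided $\limsup$ bound rather than a two-sided stability statement, consistent with the authors' remark preceding Proposition~\ref{c-stability-cvx} that a full stability result for $c_D$ is not available.
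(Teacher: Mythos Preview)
Your proposal is correct and follows essentially the same approach as the paper: the paper simply states that the proof of Proposition~\ref{c-stability-cvx} carries over verbatim, and your argument is precisely that proof with the details (Montel via the bounded target $\mbb{B}^n$, local Hausdorff convergence to restrict $\psi^j$ to compacta of $D_\infty$, the maximum principle to land in $\mbb{B}^n$, and the derivative comparison) spelled out more explicitly.
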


\subsection{Proof of Theorem~\ref{Lcr1}}
Recall that we are in the case when $p^0=0$ and $\rho$ is in the normal form for $p=p^0$. Therefore, $\Phi^{p^0}=I$, the identity map. Observe that by the transformation rule
\[
k_D(p^j)=\big\vert \det\big(S^j\big)'(p^j)\big\vert^2 k_{D^j}(b^j),
\]
where $S^j= \Delta^{\ti p^j, \de_j} \circ \Phi^{\ti p^j}$ are the scaling maps. Since
\[
\Big\vert \det (\De^{\ti p^j,\de_j})' \big(\Phi^{\ti p^j}(p^j)\big)\Big\vert^2=\prod_{\al=1}^n \tau_{\al}(\ti p^j,\de_j)^{-2},
\]
we get
\begin{equation}\label{v-D-Dj}
k_{D}(p^j)\prod_{\al=1}^n \tau_{\al}(\ti p^j,\de_j)^{2}=\big\vert \det (\Phi^{\ti p^j})'(p^j)\big\vert^2 k_{D^j}(b^j).
\end{equation}
Now $\vert \det (\Phi^{\ti p^j})'(p^j)\vert \to \vert \det \big(\Phi^{p^0}\big)^{\prime}(p^0)\vert = 1$, and recall that after possibly passing to a subsequence, the domains $D^j$ converge in the local Hausdorff sense to $D_{\infty}$. Hence by Propostion~\ref{stability-levicorank1}, the right hand side of \eqref{v-D-Dj} has $k_{D_{\infty}}(b)$ as a limit, proving the theorem in the current situation.

For the general case, assume that $(\pa \rho/\pa z_n)(p^0)\neq 0$ and make an initial change of coordinates $w=T(z)=\Phi^{p^0}(z)$. Let $\Om =T(D)$, $q^0=T(p^0)=0$, and $q^j=T(p^j)$. Then
\begin{equation}\label{vD-vOm}
k_{D}(p^j)=\big\vert \det T'(p^j)\big\vert^2 k_{\Om}(q^j).
\end{equation}
To emphasise the dependence of $\Phi^p$, $\tau$, and $\tau_{\al}$ on $D=\{\rho<0\}$, we will write them now as $\Phi^p_\rho$, $\tau_\rho$ and $\tau_{\al,\rho}$ respectively.
Note that the defining function $r=\rho\circ T^{-1}$ for $\Om$ is in the normal form at $q^0=0$. Choose $\eta_j$ such that $\ti q^j=(q^j_1, \ldots, q^j_{n-1},q^j_n+\eta_j) \in \pa \Om$. Then by the previous case
\begin{equation}\label{v-Om}
k_{\Om} (q^j)\prod_{\al=1}^n \tau_{\al,r}(\ti q^j, \eta_j)^2 \to k_{D_{\infty}}(b)
\end{equation}
up to a subsequence. Since $\de_{\Om} \circ T$ is a defining function for $D$, we have $\de_{\Om}\circ T \approx \de_D$ and hence $\de_j \approx \de_D(p^j) \approx \de_{\Om}(q^j) \approx \eta_j$. Also, by (3.3) of \cite{TT},
\[
\rho \circ (\Phi_\rho^{p^j})^{-1} = r\circ (\Phi_r^{q^j})^{-1}.
\]
It follows from (2.9) of \cite{Cho2} that $\tau_{\rho}(\ti p^j, \de_j) \approx \tau_r(\ti q^j, \eta_j)$. Hence, after passing to a subsequence if necessary,
\begin{equation}\label{tauD-tauOm}
\prod_{\al=1}^n \frac{\tau_{\al,\rho}(\ti p^j,\de_j)}{\tau_{\al,r}(\ti q^j,\eta_j)} \to c_0
\end{equation}
for some $c_0>0$ that depends only on $\rho$. Also,
\begin{equation}\label{limit-der-T}
\big\vert \det T'(p^j)\big\vert \to \big\vert \det T'(p^0)\big\vert = \left \vert \frac{\pa \rho}{\pa \ov z_n}(p^0)\right\vert\prod_{\al=2}^{n-1} \la_{\al}^{1/2},
\end{equation}
by \eqref{der-phi-p}, where $\la_{\al}$'s are the positive eigenvalues of
\[
\begin{bmatrix}
\frac{\pa^2 \rho}{\pa z_{\al}\pa \ov z_{\be}}(p^0)
\end{bmatrix}_{2 \leq \al,\be\leq n-1}.
\]
It follows from \cref{vD-vOm,v-Om,tauD-tauOm,limit-der-T} that
\[
k_{D}(p^j) \prod_{i=1}^n \tau_{\al,\rho}(\ti p^j,\de_j)^2 \to c_0^2\left \vert \frac{ \pa \rho}{\pa \ov z_n}(p^0)\right\vert^2 \left(\prod_{\al=2}^{n-1} \la_{\al} \right)\, k_{D_{\infty}}(b)
\]
up to a subsequence. This completes the proof of the theorem.

\section{Proof of Theorem~1.3}
A convex domain $D \subset \mbb{C}^n$ is called $\mbb{C}$-properly convex if it does not contain any affine complex lines. Let $\mbb{X}_n$ denote the set of all $\mbb{C}$-properly convex domains endowed with the local Hausdorff topology. Consider the space
\[
\mbb{X}_{n,0}=\big\{(D, p) : D \in \mbb{X}_n, p \in D\big\} \subset \mbb{X}_n \times \mbb{C}^n
\]
endowed with the subspace topology. It was shown in \cite{Barth} that a convex domain in $\mbb{C}^n$ is complete hyperbolic if and only if it is $\mbb{C}$-properly convex. In particular, $\mbb{C}$-properly convex domains are taut and hence the quotient invariant on such domains are well-defined. Thus we have a function $q: \mbb{X}_{n,0} \to \mf{R}$ defined by
\[
q(D, p)=q_D(p).
\]
Recall that a function $f: \mbb{X}_{n,0} \to \mf{R}$ is called intrinsic (see \cite{Zim16}) if $f(D,p)=f(D',p')$ whenever there exits a biholomorphism $F: D \to D'$ with $F(p)=p'$. Thus the function $q$ is intrinsic. The following theorem was proved by Zimmer:
\begin{thm}[\cite{Zim19}]\label{Zim-norm}
Let $f: \mbb{X}_{n,0} \to \mf{R}$ be an upper semicontinuous intrinsic function with the following property: if $D\in \mbb{X}_{n}$ and $f(D,p) \geq f(\mf{B}^n, 0)$ for all $p \in D$, then $D$ is biholomorphic to $\mf{B}^n$. Then for any $\al>0$, there exists some $\ep=\ep(n,f, \al)>0$ such that: if $D \subset \mf{C}^n$ is a bounded convex domain with $C^{2, \al}$ boundary and
\[
f(D,p) \geq f(\mf{B}^n, 0) -\ep
\]
outside some compact subset of $D$, then $D$ is strongly pseudoconvex.
\end{thm}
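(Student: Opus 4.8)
The plan is to argue by contradiction, combining an affine rescaling of convex domains at a degenerate boundary point with the rigidity hypothesis on $f$. Fix $\al>0$ and suppose the conclusion fails. Then for every $j\geq 1$ there is a bounded convex domain $D_j\subset\mf{C}^n$ with $C^{2,\al}$ boundary, together with a compact set $K_j\subset D_j$, such that
\[
f(D_j,p)\geq f(\mf{B}^n,0)-\tfrac1j\qquad\text{for all }p\in D_j\setminus K_j,
\]
yet $D_j$ is \emph{not} strongly pseudoconvex. Since convexity, the class $C^{2,\al}$, and the value of an intrinsic function are all preserved by complex-affine automorphisms of $\mf{C}^n$, I am free to pre-compose each $D_j$ with such a map. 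Because $\pa D_j$ is $C^{2,\al}$ and $D_j$ is not strongly pseudoconvex, it carries a weakly pseudoconvex boundary point $\xi_j\in\pa D_j$, where the Levi form has a null complex-tangential direction.

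The core of the proof is a rescaling centred at $\xi_j$. Choose points $p_j\in D_j$ on the inner normal to $\pa D_j$ at $\xi_j$ with $p_j\to\xi_j$, taken close enough to $\pa D_j$ that $p_j\notin K_j$ (possible for each $j$, as $K_j$ is compact), and let $A_j$ be complex-affine maps---obtained from the John ellipsoids of the sublevel sets of a defining function, equivalently from the intrinsic normalisation of convex domains---so that $A_j(p_j)$ equals a fixed base point $b_0$ and $A_j(D_j)$ sits in a position bounded above and below. After passing to a subsequence, the normalised domains $A_j(D_j)$ converge in the local Hausdorff topology to a convex domain $D_\infty\ni b_0$, and the images $A_j(\xi_j)$ converge to a boundary point $\xi_\infty\in\pa D_\infty$. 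Two properties must be extracted. First, $D_\infty$ has to be $\mf{C}$-properly convex, so that $D_\infty\in\mbb{X}_n$ and $f$ is defined on it; this rests on the boundedness and proper convexity of the $D_j$ together with the fact that the uniform $C^{2,\al}$ control of the second fundamental form prevents the normalised boundaries from collapsing into a slab. Second, since the scaling is centred at the \emph{weakly} pseudoconvex point $\xi_j$, the limiting boundary inherits a null complex-tangential direction of the Levi form at $\xi_\infty$, so $\xi_\infty$ is a weakly pseudoconvex point of the $C^2$ hypersurface $\pa D_\infty$ and $D_\infty$ is consequently \emph{not} biholomorphic to $\mf{B}^n$. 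It is precisely here that the hypothesis $\al>0$ enters, furnishing the H\"older control that both guarantees convergence of the rescaled boundaries and makes the surviving degeneracy visible.

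With the limit in hand the estimate transfers easily. Fix $q\in D_\infty$. Then $A_j^{-1}(q)\to\pa D_j$, and by the choice of $p_j$ the point $A_j^{-1}(q)$ lies in $D_j\setminus K_j$ for all large $j$, whence $f\big(D_j,A_j^{-1}(q)\big)\geq f(\mf{B}^n,0)-1/j$. As $A_j$ is a biholomorphism and $f$ is intrinsic,
\[
f\big(A_j(D_j),q\big)=f\big(D_j,A_j^{-1}(q)\big)\geq f(\mf{B}^n,0)-\tfrac1j .
\]
Since $\big(A_j(D_j),q\big)\to(D_\infty,q)$ in $\mbb{X}_{n,0}$ and $f$ is upper semicontinuous, letting $j\to\infty$ gives $f(D_\infty,q)\geq f(\mf{B}^n,0)$; as $q\in D_\infty$ was arbitrary, the rigidity hypothesis forces $D_\infty$ to be biholomorphic to $\mf{B}^n$. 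This contradicts the previous paragraph, and the contradiction proves the theorem. The quantitative dependence $\ep=\ep(n,f,\al)$ is recovered from the observation that the only latitude used was the choice $\ep_j=1/j\to0$.

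The step I expect to be the main obstacle is the rescaling lemma of the second paragraph, in three respects. First, one must pin down the correct affine normalisation, with estimates uniform in $j$, so that a local Hausdorff limit genuinely exists; for a merely $C^{2,\al}$ boundary the usual polynomial model domains are unavailable, so the scaling has to be read off from the geometry of the domain itself rather than from a finite Taylor expansion. Second, and most delicately, one must guarantee that the limit remains $\mf{C}$-properly convex rather than degenerating into a slab or a domain containing an affine complex line---this is where proper convexity of the $D_j$ and the H\"older bound on the second fundamental form must be combined, and where a flat (infinite-type) degenerate direction would require separate treatment. Third, one must prove rigorously that $D_\infty\not\cong\mf{B}^n$; the cleanest route is to exhibit the null direction of the Levi form at $\xi_\infty$ as a genuine obstruction, invoking the biholomorphic invariance of strong pseudoconvexity at a $C^2$ boundary point together with boundary regularity of biholomorphisms between such convex domains. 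The remaining ingredients---the semicontinuity bookkeeping and the disjointness $A_j^{-1}(q)\in D_j\setminus K_j$---are routine once the geometry of the scaling is under control.
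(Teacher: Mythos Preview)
This theorem is not proved in the present paper: it is quoted from Zimmer \cite{Zim19} and invoked as a black box in the proof of Theorem~\ref{q-appln}, so there is no proof here against which to compare your proposal. Your outline does match the architecture of Zimmer's own argument---contradiction, affine rescaling at a weakly pseudoconvex boundary point, extraction of a limit in $\mbb{X}_{n,0}$, and transfer of the lower bound via upper semicontinuity and intrinsicity to force $D_\infty\cong\mf{B}^n$---and you are right that the rescaling step carries essentially all of the content.

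One point in your bookkeeping needs tightening. The assertion that $A_j^{-1}(q)\in D_j\setminus K_j$ for all large $j$ is not automatic, because the compacts $K_j$ vary with $j$ and could in principle exhaust $D_j$; saying ``$A_j^{-1}(q)\to\pa D_j$'' does not by itself separate $A_j^{-1}(q)$ from a moving $K_j$. The fix is to choose $p_j$ (after $K_j$ is given) close enough to $\xi_j$ that the $A_j$-preimage of the ball $B(b_0,j)$ already misses $K_j$; this is possible since the rescalings concentrate near $\xi_j$ as $p_j\to\xi_j$, and then any fixed $q\in D_\infty$ eventually lies in $B(b_0,j)$. The substantive difficulties you flag---$\mf{C}$-proper convexity of $D_\infty$ and the obstruction $D_\infty\not\cong\mf{B}^n$---are precisely where the work in \cite{Zim19} is done, and your sketch correctly defers to that source rather than claiming them as routine.
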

Observe that if $D \subset \mf{C}^n$ is any domain and if $q_D(p) \geq 1$ for some point $p \in D$, then $q_D(p)=1$ and so $D$ must be biholomorphic to $\mf{B}^n$. Thus, to prove Theorem~1.3, we only need to show that the function $q: \mbb{X}_{n,0} \to \mf{R}$ is upper semicontinuous.

\begin{lem}\label{normality-hc}
Suppose $(D^j,p^j) \to (D_{\infty},p)$. If $f^j : \mf{B}^n \to D^j$, $f^j(0)=p^j$, then passing to a subsequence, $f^j$ converges uniformly on compact subsets of $D_{\infty}$ to a holomorphic function $f$ on $D_{\infty}$.
\end{lem}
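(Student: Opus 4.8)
The plan is the familiar normal-families argument, as in Lemmas~\ref{normality-cvx} and~\ref{normality2}, the only difference being that the ``convergence of the domains'' is now simply the local Hausdorff convergence $(D^j,p^j)\to(D_\infty,p)$ inside $\mbb{X}_{n,0}$. Write $d^K_D$ for the Kobayashi distance on a domain $D$ and $B_D(x,R)$ for the associated ball. Since each $f^j\in\mathcal{O}(\mf{B}^n,D^j)$ is distance decreasing, for $0<r<1$ and $|t|\le r$ we have $d^K_{D^j}(p^j,f^j(t))\le d^K_{\mf{B}^n}(0,t)\le M_r:=\frac12\log\frac{1+r}{1-r}$, so $f^j\big(\overline{B(0,r)}\big)\subset B_{D^j}(p^j,M_r)$. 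By Montel's theorem it is enough to show that, for each fixed $r$, these balls eventually lie in one fixed compact subset of $\mbb{C}^n$; then $\{f^j\}$ is normal, a subsequence converges uniformly on compact subsets of $\mf{B}^n$ to some $f\in\mathcal{O}(\mf{B}^n,\mbb{C}^n)$ with $f(0)=\lim p^j=p$, and it remains only to check that $f(\mf{B}^n)\subset D_\infty$. When $D_\infty$ is bounded this is automatic, since local Hausdorff convergence together with connectedness of the $D^j$ forces the $D^j$ themselves to be eventually uniformly bounded: a point of $D^j$ of large norm would, via a path in $D^j$ joining it to $p^j\to p$, produce points of $D^j$ of intermediate norm converging to a point of $\overline{D_\infty}$ of that norm, impossible once the norm exceeds $\sup_{z\in D_\infty}|z|$.

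The substance of the lemma is the general case, where $D_\infty$ --- being only $\mbb{C}$-properly convex --- may be unbounded. I would argue by contradiction. Suppose the balls $B_{D^j}(p^j,M_r)$ escape to infinity: after relabeling, there are $q_j\in D^j$ with $d^K_{D^j}(p^j,q_j)\le M_r$ and $|q_j|\to\infty$. Write $q_j=p^j+L_ju_j$ with $|u_j|=1$ and $L_j\to\infty$, and pass to a subsequence with $u_j\to u$. For $s\in[0,1]$ the affine self-contraction $\sigma^j_s(z)=(1-s)p^j+sz$ maps the convex set $D^j$ into itself, so $d^K_{D^j}\big(p^j,\sigma^j_s(q_j)\big)\le d^K_{D^j}(p^j,q_j)\le M_r$; that is, $d^K_{D^j}(p^j,p^j+sL_ju_j)\le M_r$ for every $s\in[0,1]$. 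Fixing $T>0$ and taking $s=T/L_j$ (legitimate once $L_j\ge T$) gives $d^K_{D^j}(p^j,p^j+Tu_j)\le M_r$ for all large $j$. By local Hausdorff convergence $p^j+Tu_j\to p+Tu\in\overline{D_\infty}$, and since $\overline{D_\infty}$ is convex with $p\in D_\infty$ this forces $p+Tu\in D_\infty$ for every $T\ge0$. By the lower semicontinuity of the Kobayashi distance under local Hausdorff convergence of $\mbb{C}$-properly convex domains --- a standard stability fact in this setting, see e.g.\ \cite{Zim16} --- we get $d^K_{D_\infty}(p,p+Tu)\le M_r$ for every $T\ge0$, so the entire ray $\{p+Tu:T\ge0\}$ lies in $B_{D_\infty}(p,M_r)$. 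But $\mbb{C}$-properly convex domains are complete hyperbolic (\cite{Barth}, as recalled above), so $B_{D_\infty}(p,M_r)$ is relatively compact in $D_\infty$, in particular bounded --- while a ray is not. This contradiction yields the desired uniform boundedness, hence the normality of $\{f^j\}$.

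It remains to check that the limit $f$ maps $\mf{B}^n$ into $D_\infty$, which goes exactly as in Lemma~\ref{normality-cvx}. For fixed $t\in\mf{B}^n$ we have $f^j(t)\in D^j$ and $f^j(t)\to f(t)$, so $f(t)\in\overline{D_\infty}$, and thus $f(\mf{B}^n)\subset\overline{D_\infty}$. If $f(t_0)\in\partial D_\infty$ for some $t_0$, choose (by convexity of $D_\infty$) a complex-affine functional $\ell$ with $\Re\ell<\Re\ell(f(t_0))=:c$ on $D_\infty$; then $\Re(\ell\circ f)$ is harmonic on $\mf{B}^n$, bounded above by $c$, and equal to $c$ at the interior point $t_0$, so the maximum principle forces $\ell\circ f$ to be constant, whence $f(\mf{B}^n)\subset\{\Re\ell=c\}$, contradicting $\Re\ell(f(0))=\Re\ell(p)<c$. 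Hence $f\in\mathcal{O}(\mf{B}^n,D_\infty)$, proving the lemma.

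Everything here is routine --- Montel's theorem, the affine-contraction estimate, and the maximum-principle step already used in Lemma~\ref{normality-cvx} --- except for one point, which I expect to be the main obstacle: the lower semicontinuity (stability) of the Kobayashi distance as $D^j\to D_\infty$ in $\mbb{X}_{n,0}$. This is the only place the hypothesis $D^j,D_\infty\in\mbb{X}_n$ is genuinely used; it should be either quoted from the literature on convex domains and the Kobayashi metric or proved directly, e.g.\ by comparing each $D^j$ with half-spaces bounded by supporting complex-affine hyperplanes of $D_\infty$.
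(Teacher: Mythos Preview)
The paper does not actually prove this lemma: its entire argument is the sentence ``This is precisely Lemma~3.1 of \cite{Zim16} with $\De$ replaced by $\mf{B}^n$ and since the proof is exactly same we do not repeat it.'' So there is no in-house proof to compare against beyond that citation.

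Your argument is structurally correct and identifies the right ingredients: the distance-decreasing property to trap $f^j(\overline{B(0,r)})$ inside Kobayashi balls $B_{D^j}(p^j,M_r)$, the affine self-contraction $\sigma^j_s$ (valid by convexity) to propagate the bound along the segment to $q_j$, and the supporting-hyperplane/maximum-principle step to push the limit map into $D_\infty$. All of these are fine.

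The one substantive point is the step you yourself flag. What you call ``lower semicontinuity of the Kobayashi distance under local Hausdorff convergence'' --- i.e.\ passing from $d^K_{D^j}(p^j,p^j+Tu_j)\le M_r$ to $d^K_{D_\infty}(p,p+Tu)\le M_r$ --- is, when unwound, essentially the $\Delta$ case of the very lemma you are proving. Indeed, on convex domains Lempert's theorem gives a single extremal disc $\phi^j:\Delta\to D^j$ with $\phi^j(0)=p^j$ realising $d^K_{D^j}(p^j,p^j+Tu_j)$; to pass to the limit you need precisely a normal-families statement for holomorphic maps $\Delta\to D^j$ with $\phi^j(0)=p^j\to p$, which is exactly Zimmer's Lemma~3.1. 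So your proof is a (valid) reduction of the $\mf{B}^n$ case to the $\Delta$ case, and ends up leaning on \cite{Zim16} at the same place the paper does. That is not wrong, but it means the argument is not more self-contained than the paper's one-line citation. If you want a genuinely independent proof, the semicontinuity step has to be established directly from the convex geometry (e.g.\ via supporting complex half-spaces, as you suggest in your final paragraph); once that is done for $\Delta$, the same estimates work verbatim for $\mf{B}^n$ --- which is precisely the paper's assertion.
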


This is precisely Lemma~3.1 of \cite{Zim16} with $\De$ replaced by $\mf{B}^n$ and since the proof is exactly same we do not repeat it.

\begin{prop}\label{usc}
The function $q: \mbb{X}_{n,0} \to \mf{R}$ is upper semicontinuous.
\end{prop}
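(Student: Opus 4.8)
The plan is to verify upper semicontinuity of $q$ along sequences, which suffices since the local Hausdorff topology on $\mbb{X}_{n,0}$ is metrizable. So suppose $(D^j,p^j)\to(D_\infty,p)$ in $\mbb{X}_{n,0}$; I must show $\limsup_{j\to\infty}q_{D^j}(p^j)\le q_{D_\infty}(p)$. By hypothesis $D_\infty\in\mbb{X}_n$ is $\mbb{C}$-properly convex, hence complete hyperbolic by \cite{Barth} and in particular taut; thus $k_{D_\infty}(p)>0$, $k_{D_\infty}$ is continuous by Proposition~\ref{cont-vol}, and a Kobayashi extremal map at $p$ exists. Each $D^j$ is likewise taut, so $k_{D^j}(p^j)>0$ and $q_{D^j}(p^j)=c_{D^j}(p^j)/k_{D^j}(p^j)$. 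The idea is to split the problem into two one-sided statements,
\[
\limsup_{j\to\infty} c_{D^j}(p^j) \le c_{D_\infty}(p) \qquad\text{and}\qquad \liminf_{j\to\infty} k_{D^j}(p^j) \ge k_{D_\infty}(p),
\]
since together they give $\limsup_j q_{D^j}(p^j)\le (\limsup_j c_{D^j}(p^j))/(\liminf_j k_{D^j}(p^j))\le c_{D_\infty}(p)/k_{D_\infty}(p)=q_{D_\infty}(p)$, the division being harmless because $\liminf_j k_{D^j}(p^j)\ge k_{D_\infty}(p)>0$.

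For the Carath\'eodory bound I would repeat the argument of Proposition~\ref{c-stability-cvx} almost verbatim. Note that $p^j\in D_\infty$ for $j$ large, and by local Hausdorff convergence every compact subset of $D_\infty$ lies in $D^j$ for $j$ large. Pick Carath\'eodory extremal maps $\psi^j\in\mathcal{O}(D^j,\mbb{B}^n)$ with $\psi^j(p^j)=0$ and $|\det(\psi^j)'(p^j)|^2=c_{D^j}(p^j)$. Restricting to an exhaustion of $D_\infty$ by compacts and applying Montel's theorem with a diagonal argument, pass to a subsequence along which $\psi^j\to\psi$ locally uniformly on $D_\infty$ with $\psi\in\mathcal{O}(D_\infty,\ov{\mbb{B}^n})$; since $\psi(p)=\lim_j\psi^j(p^j)=0$, the maximum principle (for the plurisubharmonic function $|\psi|^2$) forces $\psi\in\mathcal{O}(D_\infty,\mbb{B}^n)$. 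Convergence of derivatives then yields $c_{D^j}(p^j)=|\det(\psi^j)'(p^j)|^2\to|\det\psi'(p)|^2\le c_{D_\infty}(p)$ along the subsequence, and since every subsequence of $(c_{D^j}(p^j))_j$ has a further subsequence with this property, $\limsup_j c_{D^j}(p^j)\le c_{D_\infty}(p)$.

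For the Kobayashi bound I would mimic Step~2 in the proof of Proposition~\ref{stability-cvx}, replacing Lemma~\ref{normality-cvx} by Lemma~\ref{normality-hc}. If $\liminf_j k_{D^j}(p^j)<k_{D_\infty}(p)$, choose a subsequence with $k_{D^j}(p^j)\to L<k_{D_\infty}(p)$, fix $\ep>0$ with $L+\ep<k_{D_\infty}(p)$, and along this subsequence pick $\phi^j\in\mathcal{O}(\mbb{B}^n,D^j)$ with $\phi^j(0)=p^j$ and $|\det(\phi^j)'(0)|^{-2}<k_{D^j}(p^j)+\ep$. By Lemma~\ref{normality-hc}, after a further subsequence $\phi^j\to\phi$ locally uniformly on $\mbb{B}^n$ with $\phi\in\mathcal{O}(\mbb{B}^n,D_\infty)$ and $\phi(0)=p$. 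Then $\phi$ is a competitor for $k_{D_\infty}(p)$, so $k_{D_\infty}(p)\le|\det\phi'(0)|^{-2}=\lim_j|\det(\phi^j)'(0)|^{-2}\le L+\ep<k_{D_\infty}(p)$, a contradiction. Hence $\liminf_j k_{D^j}(p^j)\ge k_{D_\infty}(p)$, and combining with the Carath\'eodory bound as in the first paragraph proves the proposition; by the remark preceding it, Theorem~\ref{q-appln} then follows from Theorem~\ref{Zim-norm}.

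I expect the only genuinely delicate point to be checking that the limit $\phi$ of the near-extremal Kobayashi disks maps into $D_\infty$ rather than merely into $\ov{D_\infty}$ — this is exactly what Lemma~\ref{normality-hc} supplies, and it rests on the existence of local holomorphic peak functions for the complete hyperbolic domain $D_\infty$, so no additional work is needed there. Everything else — the Montel/diagonal argument for the Carath\'eodory extremals, the convergence of derivatives, and the extraction of subsequences — is routine.
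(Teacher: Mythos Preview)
Your proposal is correct and follows essentially the same route as the paper: the paper also reduces to the Carath\'eodory upper bound via the argument of Proposition~\ref{c-stability-cvx} and handles the Kobayashi side via Lemma~\ref{normality-hc} exactly as you do. The only cosmetic difference is that the paper carries out both Step~1 and Step~2 of Proposition~\ref{stability-cvx} to obtain the full limit $\lim_j k_{D^j}(p^j)=k_{D_\infty}(p)$, whereas you observe that the one-sided bound $\liminf_j k_{D^j}(p^j)\ge k_{D_\infty}(p)$ already suffices for $\limsup_j q_{D^j}(p^j)\le q_{D_\infty}(p)$; this is a slight economy but not a different argument.
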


\begin{proof}
The proof is very similar to the proof of Proposition~\ref{stability-cvx} and so we only outline it. Let $(D^j, a^j) \to (D_{\infty},a)$. The Step 1 of Proposition~\ref{stability-cvx} holds without any change. In view of Lemma~\ref{normality-hc}, Step 2 also holds. This implies that
\[
\lim_{j \to \infty}k_{D^j}(a^j)=k_{D_{\infty}}(a).
\]
The proof of Proposition~\ref{c-stability-cvx} goes through without any change and thus we have
\[
\limsup_{j \to \infty} c_{D^j}(a^j) \leq c_{D_{\infty}}(a).
\]
It follows that
\[
\limsup_{j \to \infty} q_{D^j}(a^j) \leq q_{D_\infty}(a)
\]
establishing the upper semicontinuity of $q$.
\end{proof}
Thus we have shown that $q$ satisfies the hypothesis of Theorem~\ref{Zim-norm} and this  completes the proof of Theorem~1.3.

\begin{bibdiv}
\begin{biblist}

\bib{BMV}{article}{
   author={Balakumar, G. P.},
   author={Mahajan, Prachi},
   author={Verma, Kaushal},
   title={Bounds for invariant distances on pseudoconvex Levi corank one
   domains and applications},
   journal={Ann. Fac. Sci. Toulouse Math. (6)},
   volume={24},
   date={2015},
   number={2},
   pages={281--388},
   issn={0240-2963},
}


\bib{Barth}{article}{
   author={Barth, Theodore J.},
   title={Convex domains and Kobayashi hyperbolicity},
   journal={Proc. Amer. Math. Soc.},
   volume={79},
   date={1980},
   number={4},
   pages={556--558},
}

\bib{BP}{article}{
   author={Bedford, Eric},
   author={Pinchuk, Sergey},
   title={Domains in ${\bf C}^{n+1}$ with noncompact automorphism group},
   journal={J. Geom. Anal.},
   volume={1},
   date={1991},
   number={3},
   pages={165--191},
}

\bib{Be}{article}{
   author={Berteloot, F.},
   author={C\oe ur\'{e}, G.},
   title={Domaines de ${\bf C}^2$, pseudoconvexes et de type fini ayant un
   groupe non compact d'automorphismes},
   language={French, with English summary},
   journal={Ann. Inst. Fourier (Grenoble)},
   volume={41},
   date={1991},
   number={1},
   pages={77--86},
}

\bib{Car}{article}{
   author={Carath\'{e}odory, C.},
   title={\"{U}ber die Abbildungen, die durch Systeme von analytischen
   Funktionen von mehreren Ver\"{a}nderlichen erzeugt werden},
   language={German},
   journal={Math. Z.},
   volume={34},
   date={1932},
   number={1},
   pages={758--792},
   issn={0025-5874},
}

\bib{Catlin}{article}{
    AUTHOR = {Catlin, David W.},
     TITLE = {Estimates of invariant metrics on pseudoconvex domains of
              dimension two},
   JOURNAL = {Math. Z.},
  FJOURNAL = {Mathematische Zeitschrift},
    VOLUME = {200},
      YEAR = {1989},
    NUMBER = {3},
     PAGES = {429--466},
      ISSN = {0025-5874},
   MRCLASS = {32H15},
  MRNUMBER = {978601},
MRREVIEWER = {K. T. Hahn},
}
		
\bib{Cheung-Wong}{article}{
   author={Cheung, Wing Sum},
   author={Wong, B.},
   title={An integral inequality of an intrinsic measure on bounded domains
   in ${\bf C}^n$},
   journal={Rocky Mountain J. Math.},
   volume={22},
   date={1992},
   number={3},
   pages={825--836},
   issn={0035-7596},
}

\bib{Cho2}{article}{
    AUTHOR = {Cho, Sanghyun},
     TITLE = {Boundary behavior of the {B}ergman kernel function on some
              pseudoconvex domains in {${\bf C}^n$}},
   JOURNAL = {Trans. Amer. Math. Soc.},
  FJOURNAL = {Transactions of the American Mathematical Society},
    VOLUME = {345},
      YEAR = {1994},
    NUMBER = {2},
     PAGES = {803--817},
      ISSN = {0002-9947},
   MRCLASS = {32H10 (32H15)},
  MRNUMBER = {1254189},
MRREVIEWER = {So-Chin Chen},
}

\bib{Dek}{article}{
   author={Dektyarev, I. M.},
   title={Criterion for the equivalence of hyperbolic manifolds},
   language={Russian},
   journal={Funktsional. Anal. i Prilozhen.},
   volume={15},
   date={1981},
   number={4},
   pages={73--74},
   issn={0374-1990},
}
		
\bib{G}{article}{
   author={Gaussier, Herv\'{e}},
   title={Characterization of convex domains with noncompact automorphism
   group},
   journal={Michigan Math. J.},
   volume={44},
   date={1997},
   number={2},
   pages={375--388},
   issn={0026-2285},
}

\bib{G2}{article}{
   author={Gaussier, Herv\'{e}},
   title={Tautness and complete hyperbolicity of domains in ${\bf C}^n$},
   journal={Proc. Amer. Math. Soc.},
   volume={127},
   date={1999},
   number={1},
   pages={105--116},
   issn={0002-9939},
}

\bib{Gr-Wu}{article}{
   author={Graham, Ian},
   author={Wu, H.},
   title={Characterizations of the unit ball $B^n$ in complex Euclidean
   space},
   journal={Math. Z.},
   volume={189},
   date={1985},
   number={4},
   pages={449--456},
   issn={0025-5874},
}

\bib{Gr-Kr1}{article}{
   author={Greene, Robert E.},
   author={Krantz, Steven G.},
   title={Characterizations of certain weakly pseudoconvex domains with
   noncompact automorphism groups},
   conference={
      title={Complex analysis},
      address={University Park, Pa.},
      date={1986},
   },
   book={
      series={Lecture Notes in Math.},
      volume={1268},
      publisher={Springer, Berlin},
   },
   date={1987},
   pages={121--157},
}

\bib{Gr-Kr2}{article}{
   author={Greene, Robert E.},
   author={Krantz, Steven G.},
   title={Biholomorphic self-maps of domains},
   conference={
      title={Complex analysis, II},
      address={College Park, Md.},
      date={1985--86},
   },
   book={
      series={Lecture Notes in Math.},
      volume={1276},
      publisher={Springer, Berlin},
   },
   date={1987},
   pages={136--207},
}


\bib{Kr-vol}{article}{
   author={Krantz, Steven G.},
   title={The Kobayashi metric, extremal discs, and biholomorphic mappings},
   journal={Complex Var. Elliptic Equ.},
   volume={57},
   date={2012},
   number={1},
   pages={1--14},
   issn={1747-6933},
}

\bib{Ma}{article}{
   author={Ma, Daowei},
   title={Boundary behavior of invariant metrics and volume forms on
   strongly pseudoconvex domains},
   journal={Duke Math. J.},
   volume={63},
   date={1991},
   number={3},
   pages={673--697},
   issn={0012-7094},
}

\bib{Ma-ext}{article}{
   author={Ma, Daowei},
   title={Carath\'{e}odory extremal maps of ellipsoids},
   journal={J. Math. Soc. Japan},
   volume={49},
   date={1997},
   number={4},
   pages={723--739},
   issn={0025-5645},
}

\bib{MV0}{article}{
    AUTHOR = {Mahajan, Prachi},
    AUTHOR={Verma, Kaushal},
     TITLE = {Some aspects of the {K}obayashi and {C}arath\'{e}odory metrics on
              pseudoconvex domains},
   JOURNAL = {J. Geom. Anal.},
  FJOURNAL = {Journal of Geometric Analysis},
    VOLUME = {22},
      YEAR = {2012},
    NUMBER = {2},
     PAGES = {491--560},
}

\bib{MV}{article}{
author={Mahajan, Prachi},
   author={Verma, Kaushal},
   title={A comparison of two biholomorphic invariants},
   journal={Internat. J. Math.},
   volume={30},
   date={2019},
   number={1},
   pages={1950012, 16},
}

\bib{McN-adv}{article}{
   author={McNeal, Jeffery D.},
   title={Estimates on the Bergman kernels of convex domains},
   journal={Adv. Math.},
   volume={109},
   date={1994},
   number={1},
   pages={108--139},
   issn={0001-8708},
}
		
\bib{Nik-sq}{article}{
   author={Nikolov, Nikolai},
   title={Behavior of the squeezing function near h-extendible boundary
   points},
   journal={Proc. Amer. Math. Soc.},
   volume={146},
   date={2018},
   number={8},
   pages={3455--3457},
   issn={0002-9939},
}

\bib{Nik-Pas}{article}{
author={Nikolov, Nikolai},
author={Thomas, Pascal J.},
title={Comparison of the Bergman kernel and the Carath\'eodory-Eisenman volume},
journal={To appear in Proc. Amer. Math. Soc.},
}

\bib{NV}{article}{
author={Nikolov, Nikolai}
author={Verma, Kaushal},
title={On the squeezing function and Fridman invariants},
journal={J. Geom. Anal.},
}

\bib{Ro}{article}{
   author={Rosay, Jean-Pierre},
   title={Sur une caract\'{e}risation de la boule parmi les domaines de ${\bf
   C}^{n}$ par son groupe d'automorphismes},
   journal={Ann. Inst. Fourier (Grenoble)},
   volume={29},
   date={1979},
   number={4},
   pages={ix, 91--97},
   issn={0373-0956},
}

\bib{TT}{article}{
   author={Thai, Do Duc},
   author={Thu, Ninh Van},
   title={Characterization of domains in $\Bbb C^n$ by their noncompact
   automorphism groups},
   journal={Nagoya Math. J.},
   volume={196},
   date={2009},
   pages={135--160},
   issn={0027-7630},
}

\bib{Wong}{article}{
   author={Wong, B.},
   title={Characterization of the unit ball in ${\bf C}^{n}$ by its
   automorphism group},
   journal={Invent. Math.},
   volume={41},
   date={1977},
   number={3},
   pages={253--257},
   issn={0020-9910},
}


\bib{Yu2}{article}{
   author={Yu, Ji Ye},
   title={Weighted boundary limits of the generalized Kobayashi-Royden
   metrics on weakly pseudoconvex domains},
   journal={Trans. Amer. Math. Soc.},
   volume={347},
   date={1995},
   number={2},
   pages={587--614},
   issn={0002-9947},
}

\bib{Zim16}{article}{
   author={Zimmer, Andrew M.},
   title={Gromov hyperbolicity and the Kobayashi metric on convex domains of
   finite type},
   journal={Math. Ann.},
   volume={365},
   date={2016},
   number={3-4},
   pages={1425--1498},
}

\bib{Zim}{article}{
   author={Zimmer, Andrew},
   title={A gap theorem for the complex geometry of convex domains},
   journal={Trans. Amer. Math. Soc.},
   volume={370},
   date={2018},
   number={10},
   pages={7489--7509},
}

\bib{Zim19}{article}{
   author={Zimmer, Andrew},
   title={Characterizing strong pseudoconvexity, obstructions to
   biholomorphisms, and Lyapunov exponents},
   journal={Math. Ann.},
   volume={374},
   date={2019},
   number={3-4},
   pages={1811--1844},
}

\end{biblist}
\end{bibdiv}

\end{document}